\newtheorem{lemma}{Lemma}[section]
\newtheorem{proposition}{Proposition}[section]
\newtheorem{theorem}{Theorem}[section]
\newtheorem{corollary}{Corollary}[section]
\theoremstyle{remark}
\newtheorem{remark}{\bf{Remark}}[section]
\newtheorem{example}{\bf{Example}}[section]
\newcommand{\ConvD}{\overset{d}{\rightarrow}}
\newcommand{\ConvP}{\overset{p}{\rightarrow}}
\newcommand{\Cov}{\mathrm{Cov}}
\newcommand{\Corr}{\mathrm{Corr}}
\newcommand{\Var}{\mathrm{Var}}
\newcommand{\E}{\mathbb{E}}
\newcommand{\I}{\mathrm{I}}
\def\blfootnote{\xdef\@thefnmark{}\@footnotetext}
\title{A unified approach to self-normalized block sampling}
\author{Shuyang Bai, Murad S. Taqqu and Ting Zhang\footnote{The authors are listed in alphabetical order. They all contributed equally to this work.}}
\begin{document}
\maketitle


\begin{abstract}
The inference procedure for the mean of a stationary time series is usually quite different under various model assumptions because the partial sum process behaves differently depending on whether the time series is short or long-range dependent, or whether it has a light or heavy-tailed marginal distribution. In the current paper, we develop an asymptotic theory for the self-normalized block sampling, and prove that the corresponding block sampling method can provide a unified inference approach for the aforementioned different situations in the sense that it does not require the {\em a priori} estimation of auxiliary parameters. Monte Carlo simulations are presented to illustrate its finite-sample performance. The \texttt{R} function implementing the method is available from the authors.
\end{abstract}

\blfootnote{
\begin{flushleft}
AMS 2000 subject classifications: Primary 62G09; secondary 60G18\\
Key words and phrases: time series; subsampling; block sampling; sampling window; self normalization; heavy tails; long-range dependence; long memory
\end{flushleft}
}

\section{Introduction}\label{sec:intro}
Given samples $X_1,\ldots,X_n$ from a stationary process $\{X_i\}_{i \in \mathbb Z}$ with mean $\mu = E(X_0)$, the sample average $\bar X_n = n^{-1} \sum_{i=1}^n X_i$ serves as a natural estimator for the population mean $\mu$. To conduct statistical inference on the mean $\mu$ such as hypothesis testing or the construction of confidence intervals, one needs an asymptotic theory on the sample average for dependent data. The development of such a theory has been an active area of research. Consider first the classical case, where by assuming certain short-range dependence conditions, one obtains the usual central limit theorem, that is,
\begin{equation}\label{eqn:CLTmu}
n^{1/2} (\bar X_n - \mu) \ConvD N(0, \sigma^2),
\end{equation}
where $\ConvD$ denotes the convergence in distribution, and $\sigma^2$ is the long-run variance which typically is the sum of autocovariances of all orders. The short-range dependence conditions mentioned above include, but are not limited to, the $m$-dependence condition of \citet{Hoeffding:Robbins:1948}, the strong mixing condition of \citet{Rosenblatt:1956} and its variants, and the $p$-stability condition based on functional dependence measures of \citet{Wu:2005}; see also \citet{Ibragimov:Linnik:1971}, \citet{Peligrad:1996}, \citet{Maxwell:Woodroofe:2000}, \citet{Bradley:2007}, \citet{Wu:2011} and references therein. Once one has (\ref{eqn:CLTmu}), an asymptotic $100(1-\alpha)\%$ confidence interval of $\mu$ can be constructed as
\begin{equation}\label{eqn:CImu}
[\bar X_n -  n^{-1/2} \sigma q_{1-\alpha/2},\  \bar X_n  + n^{-1/2} \sigma q_{1-\alpha/2}]
\end{equation}
where $q_{1-\alpha/2}$ is the $(1-\alpha/2)$-th quantile of the standard normal distribution. However, the implementation of (\ref{eqn:CImu}) requires the estimation of a nuisance parameter $\sigma$,  which can itself be a challenging problem and often relies on techniques including tapering and thresholding to achieve  consistency; see for example \citet{Newey:West:1987}, \citet{Flegal:Jones:2010}, \citet{Politis:2011} and \citet{Zhang:Wu:2012} among others.

If the process $(X_i)_{i\in \mathbb{Z}}$ is   heavy-tailed (distributional tail behaving like $x^{-\alpha}$ with $\alpha\in (1,2)$) so that the variance is infinite, one typically has
\begin{equation}\label{eqn:CLTmu heavy}
n^{1-1/\alpha}\ell(n)^{-1} (\bar X_n - \mu) \ConvD S_\alpha(\sigma,\beta,0),
\end{equation}
where $\ell(n)$ is a slowly varying function satisfying $\lim_{n \to \infty} \ell(an)/\ell(n) = 1$ for any $a > 0$, and $S_\alpha(\sigma,\beta,0)$ is the centered $\alpha$-stable random variable with scale parameter $\sigma>0$ and skewness parameter $\beta\in[-1,1]$.  We refer the reader to the monographs by \citet{samorodnitsky:taqqu:1994:stable}, \citet{nolan:2015:stable} and \citet{resnick:2007:heavy} for an introduction. See  also \citet{feldman:1998:practical} for examples of heavy tails from finance, signal processing,  networks, etc. Here the use of (\ref{eqn:CLTmu heavy}) for constructing confidence interval as in (\ref{eqn:CImu}) becomes more difficult due to additional unknown parameters $\sigma$, $\alpha$ and $\beta$,  as well as the unknown $\ell(n)$.

There has been a considerable amount of research focusing on the situation where the short-range dependence condition fails, and processes with \emph{long-range dependence} (also called  ``long memory" or ``strong dependence") has attracted a lot of attention in various fields including econometrics, finance, hydrology and telecommunication among others; see for example  \cite{mandelbrot:1968:noah}, \citet{ding:1993:long},  \citet{leland:1994:self} and \citet{baillie:1996:long}. We also refer the reader to the monographs by \citet{doukhan:oppenheim:taqqu:2003:theory}, \citet{giraitis:koul:surgailis:2009:large} and \citet{beran:2013:long} for an introduction. For long-range dependent processes, it may be established that
\begin{equation}\label{eqn:CLTLRDmu}
n^{1-H} \ell(n)^{-1} (\bar X_n - \mu) \ConvD Y,
\end{equation}
where $H \in (1/2,1)$ is the Hurst index (or the long memory index), $\ell(n)$ is a slowly varying function, and $Y$ is typically a random variable which can be expressed by a multiple Wiener-It\^{o} integral and is not necessarily Gaussian. The large sample theory of the form (\ref{eqn:CLTLRDmu}) has been studied by  \citet{Davydov:1970}, \citet{Taqqu:1975}, \citet{Dobrushin:Major:1979}, \citet{Avram:Taqqu:1987}, \citet{Ho:Hsing:1997}, \citet{Wu:2006} and \citet{Bai:Taqqu:2014} among others. Therefore, the asymptotic behavior of the sample average and thus the inference procedure can become very different for long-range dependent processes, and the convergence rate in (\ref{eqn:CLTLRDmu}) depends critically on the Hurst index $H$ which characterizes the dependence strength. Hence, in order to apply (\ref{eqn:CLTLRDmu}) for inference, unlike the case with short-range dependence and light tail, one needs to estimate in addition the Hurst index $H$ and possibly the slowly varying function $\ell(n)$, which can be quite nontrivial. Furthermore, the distribution of a non-Gaussian $Y$ (which also depends on $H$) has not been numerically evaluated in general. For the special case of the Rosenblatt distribution where it is evaluated, see \citet{veillette:taqqu:2013:properties}.

There has recently been a surge of attention in using some random normalizers to avoid, or reduce the number of  nuisance parameters that need to be estimated for statistical inference. For example, \citet{McElroy:Politis:2002} considered using the sample standard deviation as the normalizer for inference on the mean of heavy-tailed linear processes that satisfy the strong mixing condition; see also \citet{Romano:Wolf:1999} for the use of a similar normalizer for independent observations. \citet{Lobato:2001}, \citet{Shao:2010}, \citet{Zhou:Shao:2013} and \citet{Huang:Volgushev:Shao:2015} used a normalization of the type
\begin{equation}\label{eqn:normalizer}
D_n = \left\{n^{-1}\sum_{k=1}^n \left(\sum_{i=1}^k X_i -\frac {k}{n} \sum_{i=1}^n X_i\right)^2\right\}^{1/2}
\end{equation}
for finite-variance short-range dependent time series. \citet{Fan:2010} used the normalizer $D_n$ for long-range dependent time series with finite variances. Results have also been obtained by \citet{McElroy:Politis:2013} using a lag-window normalizer instead of $D_n$ in (\ref{eqn:normalizer}). \citet{McElroy:Politis:2007}, moreover, considered the following non-centered stochastic volatility model $X_i = \mu + \sigma_i Z_i$, $i \geq 1$, where $\{\sigma_i\}$ and $\{Z_i\}$ are independent, $\{\sigma_i\}$ is i.i.d.\ heavy-tailed and $\{Z_i\}$ is a Gaussian process.  They proposed to use a random normalizer involving two terms that account for heavy-tailedness and long memory respectively. The term in their normalizer which accounts for long memory requires the choice of an additional tuning parameter. Therefore, it seems that the specific form of the normalization depends critically on the particular time series that is being considered, and different normalizers have been used in the literature to account for the heavy-tail and/or long-range dependent characteristics of the time series.

The current paper aims to provide a unified inference procedure by adopting the normalizer $D_n$ in (\ref{eqn:normalizer}) and developing an asymptotic theory using self-normalized block sums.
As observed by \citet{Shao:2011:CPLRD}, self-normalization itself is not able to fully avoid the problem of estimating the nuisance parameters, as the asymptotic distribution  at least  depends on  the unknown Hurst index $H$ for long-range dependent processes. In order to provide a unified approach that does not rely on the estimation of any nuisance parameter to determine the strength of dependence or heavy-tailedness, certain nonparametric techniques such as the block sampling\footnote{The following terms are used interchangeably in the literature: block sampling, subsampling, sampling window method.} must be utilized to obtain the asymptotic quantiles. However, this requires developing an asymptotic theory on the self-normalized block sums for a general class of processes. This task may be nontrivial if we want it to include processes with long-range dependence and/or heavy-tails. Block sampling has been mainly studied in the literature in the {\em non-self-normalized} setting, where the normalizer converges in probability to a nonzero constant, thus simplifying the proof; see for example \citet{Hall:Jing:Lahiri:1998} for nonlinear transforms of Gaussian processes, \citet{Nordman:Lahiri:2005} for linear processes, and \citet{Zhang:Ho:Wendler:Wu:2013} for nonlinear transforms of linear processes. 
\citet{jach:2012:subsampling} applied block sampling to the model $X_i = \mu + \sigma_i Z_i$, $i \geq 1$, considered by \citet{McElroy:Politis:2007} but with $Z_i$ replaced by $g(Z_i)$ where $g$ is a possibly nonlinear function with Hermite rank one.
For more information on block sampling, see  \citet{sherman:carlstein:1996:replicate} and \citet{lahiri:2003:resampling}. \citet{betken:Wendler:2015:subsampling} recently obtained interesting results in the context of long-range dependence. They are briefly discussed in Section \ref{sec:ass A3} (see (\ref{eq:BW b_n}) below).

The current paper considers \emph{self-normalized} block sums using $D_n$ in (\ref{eqn:normalizer}) as normalizer. As observed by \citet{Fan:2010}, the development of an asymptotic theory in this case can be very nontrivial even for Gaussian processes. Developing a rigorous proof is stated as an open problem. The goal of this paper is to develop such a proof for nonlinear functions of Gaussian processes with either short or long-range dependence, and including heavy-tails.

The remaining of the paper is organized as follows. Section \ref{sec:method} introduces the self-normalized block sampling (SNBS) method, whose asymptotic theory is established in Section \ref{sec:theory}.  Section \ref{sec:example} contains examples. Monte Carlo simulations are carried out in Section \ref{sec:numericalexperiments} to examine the finite-sample performance of the method.

\section{Self-Normalized Block Sampling}\label{sec:method}
Let $X_1,\ldots,X_n$ be observations from a stationary process $(X_i)_{i \in \mathbb Z}$ with mean $\mu = E(X_0)$, and denote by
$
S_{j,k} = \sum_{i=j}^k X_i,  j \leq k,
$
its partial sums from $j$ to $k$. Of particular interest is $S_{1,n}=\sum_{i=1}^n X_i$. We propose using the self-normalized quantity
\begin{equation}\label{eqn:Tn}
T_n^*  ={S_{1,n} - n\mu \over D_n}
\end{equation}
for making statistical inference on the mean $\mu$, where $D_n$, defined in (\ref{eqn:normalizer}), can now be written
\begin{equation}\label{eq:D_n}
D_n=\left\{n^{-1}\sum_{k=1}^n \left(S_{1,k}-\frac
{k}{n}S_{1,n}\right)^2\right\}^{1/2}.
\end{equation}
In order to make inference on $\mu$, we  need to know the distribution $P(T_n^*\le x)$.

A first idea is to use the asymptotic distribution of (\ref{eqn:Tn}). This would require knowing the  weak limit of the normalized partial sum process,
namely,
\begin{equation}\label{eqn:general limit}
\{n^{-H}\ell(n)^{-1}(S_{\lfloor nt \rfloor} - n\mu),\ 0 \leq t \leq 1\} \Rightarrow \{Y(t),\ 0 \leq t \leq 1\},
\end{equation}
where  $t \in [0,1]$,  $\lfloor nt \rfloor$ denotes the largest integer not exceeding $nt$, and  $\Rightarrow$ denotes weak convergence in Skorokhod space with suitable topology.
By \citet{lamperti:1962:semi}, if (\ref{eqn:general limit}) holds, then the process $Y(t)$ is self-similar with stationary increments, with Hurst index\footnote{We exclude the degenerate case $H=1$.}  $0<H< 1$($H$-sssi), and with $\ell(\cdot)$ a slowly varying function. Recall that a process $Y(t)$ is said to be self-similar with Hurst index $H$ if $\{Y(c t),~t\ge 0\}$ has the same finite-dimensional distributions as $\{c^{H}Y(t),~t\ge 0\}$, for any $c>0$.

The most important example of (\ref{eqn:general limit}) is when $(X_i)_{i \in \mathbb Z}$ is short-range dependent and admits finite variance, in which case one  expects
\begin{equation}\label{eqn:SntSRD}
\{n^{-1/2}(S_{\lfloor nt \rfloor} - n\mu),\ 0 \leq t \leq 1\} \Rightarrow \{\sigma B(t),\ 0 \leq t \leq 1\},
\end{equation}
where $B(\cdot)$ is the standard Brownian motion, and $\sigma^2>0$ is the  long-run variance; see for example, the invariance principle of \citet{herrndorf:1984:functional} under strong mixing, and also the strong invariance principle of \citet{Wu:2007}.
When $\{X_i\}$ is short-range dependent but has infinite  variance with distributional tail regularly varying of order $-\alpha$ where $\alpha\in (1,2)$,  one has typically
\begin{equation}\label{eqn:SntSRDStable}
\{n^{-1/\alpha}\ell(n)^{-1}(S_{\lfloor nt \rfloor} - n\mu),\ 0 \leq t \leq 1\} \Rightarrow \{L_{\alpha,\sigma,\beta}(t),\ 0 \leq t \leq 1\},
\end{equation}
where $L_{\alpha,\sigma,\beta}(t)$ is a centered $\alpha$-stable L\'evy process with scale parameter $\sigma>0$ and skewness parameter $\beta\in [-1,1]$.  See, for example, \citet{skorokhod:1957:limit}, \citet{avram:taqqu:1992:weak}, \citet{tyran:2010:convergence}, \citet{tyran:2010:functional} and \citet{basrak:2012:functional} for the specification of the corresponding Skorohod topology.

Under long-range dependence, the limit in (\ref{eqn:general limit}) can be quite complicated. A typical class of  convergence in this case is
\begin{equation}\label{eqn:SntLRD}
\{n^{-H}\ell(n)^{-1}(S_{\lfloor nt \rfloor} - n\mu),\ 0 \leq t \leq 1\} \Rightarrow \{ c  Z_{m,H}(t),\ 0 \leq t \leq 1\},
\end{equation}
where $1/2<H<1$, $ Z_{m,H}(\cdot)$ is the $m$-th order Hermite process  which can be expressed by a multiple Wiener-It\^o integral (see, e.g.,  \citet{Dobrushin:Major:1979} and \citet{taqqu:1979:convergence}), and $c$ is a constant depending on $H$, $m$ and $\ell(n)$. A Hermite process $Z_{m,H}(\cdot)$ with $m\ge 2$ is non-Gaussian, and
when $m=1$ it is the Gaussian process called fractional Brownian motion, also denoted by $B_H(\cdot)$. One can also consider the anti-persistent case $H<1/2$, where the limit can be more complicated than $ Z_{m,H}(\cdot)$ (see \citet{major:1981:limit}).

Applying  the \emph{same} normalization $n^{-H}\ell(n)^{-1}$ to both the numerator and denominator of $T_n^*$ in (\ref{eqn:Tn}), one can establish  as in \citet{Lobato:2001}, via (\ref{eqn:general limit}) and the Continuous Mapping Theorem that as $n\rightarrow\infty$,
\begin{equation}\label{eqn:general self norm limit}
T_n^*={n^{-H}\ell(n)^{-1}(S_{1,n} - n\mu) \over n^{-H}\ell(n)^{-1}\left\{n^{-1}\sum_{k=1}^n (S_{1,k}-\frac
{k}{n}S_{1,n})^2\right\}^{1/2}} ~\ConvD ~ T:=\frac{Y(1)}{D},
\end{equation}
with
\begin{equation}\label{eq:D}
D=\left[\int_0^1 \{Y(s) - s Y(1)\}^2ds\right]^{1/2}.
\end{equation}
Note that $D>0$ almost surely. Indeed, if $P(D=0)>0$, then with positive probability $Y(s)=sY(1)$, which has locally bounded variation. This cannot happen  by Theorem 3.3 of \citet{vervaat:1985:sample}, since we assume $H<1$.

In particular, in the short-range dependent case (\ref{eqn:SntSRD}), one gets
\begin{equation*}
T_n^* \ConvD {B(1) \over \left[\int_0^1 \{B(s) - sB(1)\}^2ds\right]^{1/2}},
\end{equation*}
where the limit does not depend on any nuisance parameter. However, this nice property no longer holds in the other cases (\ref{eqn:SntSRDStable}) and (\ref{eqn:SntLRD}), since  $Y(t)$ in either case involves additional parameters. Therefore, except for short-range dependent light-tailed processes, self-normalization itself is usually not able to fully avoid the problem of estimating the nuisance parameters, and we shall  follow here \citet{Hall:Jing:Lahiri:1998} and consider a block sampling approach. See also Chapter 5 of \citet{politis:1999:subsampling}. Let
\begin{equation}\label{eqn:TiBn}
T_{i,b_n}^* = {S_{i,i+b_n-1} - b_n \mu \over \sqrt{b_n^{-1}\sum_{k=i}^{i+b_n-1}(S_{i,k} - b_n^{-1}(k-i+1) S_{i,i+b_n-1})^2}}=:\frac{S_{i,i+b_n-1} - b_n \mu}{D_{i,b_n}},\quad 1\le i \le n-b_n+1,
\end{equation}
which is the block version of $T_n^*$ in (\ref{eqn:Tn}) for the subsample $X_i,\ldots,X_{i+b_n-1}$, where $b_n$ denotes the block size. Observe that there is a considerable overlap between successive blocks, since as $i$ increases to $i+1$, the subsample becomes $X_{i+1},\ldots,X_{i+b_n}$, and thus includes many of the same observations.

 We consider using the empirical distribution function
\begin{equation}\label{eq:F_N}
\widehat{F}_{n,b_n}^*(x) = {1 \over n-b_n+1} \sum_{i=1}^{n-b_n+1} \mathrm I(T_{i,b_n}^* \leq x),
\end{equation}
where $\I(\cdot) $ is the indicator function,
to approximate the distribution $P(T_n^* \leq x)$ of $T_n^*$ in (\ref{eqn:Tn}). In practice, the mean $\mu$ in (\ref{eqn:TiBn}) is unknown and we shall replace it by the  average $\bar X_n$ of the whole sample, which turns (\ref{eqn:TiBn}) into
\begin{equation}\label{eqn:TiBn*}
T_{i,b_n} = {S_{i,i+b_n-1} - b_n \bar X_n \over \sqrt{b_n^{-1}\sum_{k=i}^{i+b_n-1}(S_{i,k} - b_n^{-1}(k-i+1) S_{i,i+b_n-1})^2}},
\end{equation}
whose empirical distribution function is given by
\begin{equation}\label{eq:F_N^*}
\widehat{F}_{n,b_n}(x) = {1 \over n-b_n+1} \sum_{i=1}^{n-b_n+1} \mathrm I(T_{i,b_n} \leq x).
\end{equation}
The asterisk in $T_{i,b_n}^*$ indicates that the centering involves the unknown population mean $\mu$, in contrast to $T_{i,b_n}$, where the centering involves instead the sample average $\bar{X}_n$. We call the above inference procedure involving using  $\widehat{F}_{n,b_n}(x)$ in (\ref{eq:F_N^*}) to approximate the distribution of $T_n^*$ in (\ref{eqn:Tn}), the {\it self-normalized block sampling (SNBS)} method.
One can then construct confidence intervals or  test hypotheses for the unknown population mean $\mu$. For instance,  to construct a one-sided $100(1-\alpha)\%$ confidence interval for  $\mu$, one gets first the $\alpha$-th quantile $q_{\alpha}$  of the empirical distribution $\widehat{F}_{n,b_n}(x)$ in (\ref{eq:F_N^*}). Since
\[
1-\alpha\approx P(T_n^*\ge q_\alpha)=P\left( \frac{S_{1,n}-n\mu}{D_n}\ge q_\alpha \right) =P\left(\mu\le \bar{X}_n-q_\alpha D_n/n\right),
\]
where $D_n$ is defined in (\ref{eq:D_n}), then the $100(1-\alpha)\%$ confidence interval is constructed as
\begin{equation}\label{eq:CI}
\left(-\infty~, ~\bar X_n -  q_{\alpha}D_n/n \right].
\end{equation}

 The idea of using block sampling to approximate distributions of self-normalized quantities is not new, and it has been applied by \citet{Fan:2010} and \citet{McElroy:Politis:2013} to long-range dependent processes with finite variances. However, the aforementioned papers did not provide a full theoretical justification for their inference procedure based on block sampling, and as commented by \citet{Fan:2010} such a task can be very nontrivial even for Gaussian processes and has been stated as an open problem. In addition, the aforementioned papers only considered the situation with finite variances, and therefore it has not been known whether one could  unify the inference procedure for processes with long-range dependence and/or heavy-tails.

Recently, \citet{jach:2012:subsampling} considered this problem in the setting of stochastic volatility models where the error term can be nicely decomposed into two independent factors, with one being a function of long-range dependent Gaussian processes while the other being i.i.d.\ heavy-tailed\footnote{As noted in Section \ref{sec:example} below, we can recover the consistency result of \citet{jach:2012:subsampling}  by replacing our normalization $D_n$ by the one found in that paper.}. But in their paper, the nonlinear function is restricted to have Hermite rank one and the choice of slowly varying functions is also greatly limited as neither $\log n$ nor $\log \log n$ are allowed. In addition, their random normalizer is specifically tailored to the aforementioned stochastic volatility model, and involves two different terms to account for the long-range dependent and heavy-tailed characteristics of the time series. Furthermore, the term in their normalizer that accounts for long-range dependence also requires the choice of an additional tuning parameter as in the estimation of the long-run variance for short-range dependent processes.
We also mention that the proof of \citet{jach:2012:subsampling}, which relies on the $\theta$-weak dependence, does not seem to be applicable in the current setting, since using our random normalizer $D_n$ in the denominator  makes the self-normalized quantity a non-Lipschitz  function of the data.

The current paper proposes to consider the use of (\ref{eq:F_N^*}) to provide a unified inference procedure without the estimation of a nuisance parameter for a wide class of processes, where the limit of the partial sum process can be a Brownian motion, an $\alpha$-stable L\'{e}vy process, a Hermite process or other processes. In Section \ref{sec:theory}, we develop an asymptotic theory for the self-normalized block sums and establish the theoretical consistency of the aforementioned method, namely,
\begin{equation}\label{eq:consistency}
|\widehat{F}_{n,b_n}(x) - P(T_n^* \leq x)| \rightarrow 0
\end{equation}
in probability as $n\rightarrow\infty$.

\section{Asymptotic Theory}\label{sec:theory}
We establish the asymptotic consistency of self-normalized block sampling for the following two classes of stationary processes:  (a) nonlinear transforms of  Gaussian stationary processes (called Gaussian subordination), and (b) those satisfying strong mixing conditions.
The first allows for long-range dependence and non-central limits, while the second  involves  short-range dependent processes.  Both classes allow  for heavy-tails with infinite variance.

Let $D[0,1]$ be the   space of c\`adl\`ag (right continuous with left limits) functions defined on $[0,1]$, endowed with Skorokhod's $M_2$ topology. The $M_2$ topology is weaker than the other topologies proposed by \citet{skorokhod:1956:limit}, in particular, weaker than the most commonly used $J_1$ topology. A sequence of function $x_n(t)\in D[0,1]$ converges to $x(t)\in D[0,1]$ in $M_2$ topology as $n\rightarrow\infty$, if and only if $\lim_{n}\sup_{t_1\le t\le t_2} x_n(t)=\sup_{t_1\le t\le t_2} x(t)$
 and $\lim_{n}\inf_{t_1\le t\le t_2} x_n(t)=\inf_{t_1\le t\le t_2} x(t)$ for any $t_1, t_2$ at continuity points of $x(t)$ (see statement 2.2.10 of \citet{skorokhod:1956:limit}).

We consider the $M_2$ topology instead of $J_1$ since there are known examples in the heavy tailed case  where convergence  fails under $J_1$ but holds under $M_2$ (see \citet{avram:taqqu:1992:weak}, \citet{tyran:2010:functional} and \citet{basrak:2012:functional}).
To apply the continuous mapping argument, we need the following lemma.
\begin{lemma}\label{Lem:int cont}
Integration on $[0,1]$ is  a continuous functional for $D[0,1]$ under the $M_2$ topology.
\end{lemma}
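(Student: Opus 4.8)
The plan is to show that the map $\Phi: D[0,1] \to \mathbb{R}$ given by $\Phi(x) = \int_0^1 x(s)\,ds$ is continuous when $D[0,1]$ carries the $M_2$ topology, using the $M_2$-convergence criterion recalled just before the lemma. Fix $x \in D[0,1]$ and a sequence $x_n \to x$ in $M_2$. I would first observe that $M_2$ convergence is strong enough to imply uniform boundedness: since $\sup_{0\le t\le 1} x_n(t) \to \sup_{0\le t\le 1} x(t)$ and $\inf_{0\le t\le 1} x_n(t) \to \inf_{0\le t\le 1} x(t)$ (taking $t_1=0$, $t_2=1$, which are continuity points up to a harmless adjustment, or approximating them by continuity points), the sequence $(x_n)$ is bounded in sup-norm, say $\|x_n\|_\infty \le C$ for all $n$. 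This lets me bound integrands uniformly and reduces the problem to a pointwise/dominated-convergence argument.

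Next I would establish pointwise convergence of $x_n$ to $x$ at each continuity point $s$ of $x$. Given such an $s$ and $\varepsilon > 0$, pick continuity points $t_1 < s < t_2$ with $t_2 - t_1$ small enough that the oscillation of $x$ on $[t_1,t_2]$ is less than $\varepsilon$; this is possible because the set of discontinuities of a càdlàg function is at most countable and its jumps away from a continuity point are controlled on small intervals. Then $\sup_{t_1 \le t \le t_2} x(t)$ and $\inf_{t_1 \le t \le t_2} x(t)$ both lie within $\varepsilon$ of $x(s)$, and by the $M_2$ criterion the corresponding sup and inf of $x_n$ converge to these, so for $n$ large $x_n(t)$ is squeezed (since $\inf_{[t_1,t_2]} x_n \le x_n(s) \le \sup_{[t_1,t_2]} x_n$) to within $O(\varepsilon)$ of $x(s)$. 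Hence $x_n(s) \to x(s)$ at every continuity point $s$, i.e.\ for Lebesgue-a.e.\ $s \in [0,1]$.

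Finally, combining the uniform bound $|x_n(s)| \le C$ with a.e.\ pointwise convergence, the bounded convergence theorem gives $\int_0^1 x_n(s)\,ds \to \int_0^1 x(s)\,ds$, which is the desired continuity of $\Phi$.

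The main obstacle is the passage from the sup/inf characterization of $M_2$ convergence — which is a statement about extrema over subintervals, not about values at points — to genuine pointwise convergence at continuity points; the squeeze argument above is the crux, and it relies essentially on being able to trap $s$ between nearby continuity points on which $x$ has small oscillation, together with the elementary fact that $x_n(s)$ always lies between its own inf and sup over any interval containing $s$. The uniform boundedness step, while routine, is also worth stating explicitly since $M_2$ convergence does not a priori control sup-norm of arbitrary elements of $D[0,1]$, only along convergent sequences.
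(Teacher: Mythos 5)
Your proof is correct, but it takes a genuinely different route from the paper's. The paper argues via Riemann sums: it sandwiches $\int_0^1 x_n$ between lower and upper Darboux sums over a partition whose nodes are continuity points of $x$, passes to the limit in each of the finitely many sups and infs using the $M_2$ criterion, and invokes Riemann integrability of $x$ (c\`adl\`ag functions being bounded and a.e.\ continuous) to make the upper--lower gap at most $\epsilon$. You instead extract two intermediate facts from the same $M_2$ criterion --- a uniform sup-norm bound along the convergent sequence, and pointwise convergence $x_n(s)\to x(s)$ at every continuity point $s$ of $x$, obtained by squeezing $x_n(s)$ between its own inf and sup over a small interval $[t_1,t_2]\ni s$ with continuity-point endpoints on which $x$ has small oscillation --- and then conclude by bounded convergence, since the discontinuity set of $x$ is Lebesgue-null. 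Both arguments rest on the same ingredients (the sup/inf characterization of $M_2$ convergence and countability of the discontinuity set), and both implicitly treat the endpoints $0$ and $1$ as continuity points, which is the standard Skorokhod convention (the paper does so by keeping $t_0=0$ and $t_k=1$ as partition nodes). Your route proves slightly more along the way: a.e.\ pointwise convergence plus a uniform bound immediately gives continuity of more general functionals such as $x\mapsto\int_0^1 g(x(s))\,ds$ for bounded continuous $g$, while the paper's Riemann-sum argument is more elementary in that it never leaves the Riemann framework. One phrasing to sharpen: the existence of $t_1<s<t_2$ with small oscillation of $x$ on $[t_1,t_2]$ follows simply from two-sided continuity of $x$ at $s$ (a c\`adl\`ag function continuous at $s$ satisfies $\lim_{t\to s}x(t)=x(s)$), with countability of the discontinuities needed only to make $t_1,t_2$ themselves continuity points; your appeal to ``jumps away from a continuity point'' obscures this.
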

\begin{proof}
Suppose that $x_n(t)\rightarrow x(t)$ in the $M_2$ topology.
For any partition  $\mathcal{T}=\{0=t_0< t_1<\ldots<t_{k-1}< t_k=1\}$, define  $m_{i,n}=\inf_{t_{i-1}\le t\le t_{i}} x_n(t)$, $M_{i,n}=\sup_{t_{i-1}\le t\le t_i} x_n(t)$, $m_i=\inf_{t_{i-1}\le t\le t_{i}} x(t)$ and $M_i=\sup_{t_{i-1}\le t\le t_i} x(t)$, $i=1,\ldots,k$. Note that
\begin{align}
\sum_{i=1}^k m_{i,n}(t_i-t_{i-1})\le &\int_0^1 x_n(t)dt \le \sum_{i=1}^k M_{i,n}(t_i-t_{i-1}),\notag\\
\sum_{i=1}^k m_i(t_i-t_{i-1})\le &\int_0^1 x(t)dt \le \sum_{i=1}^kM_i(t_i-t_{i-1}).\label{eq:Riemann 1}
\end{align}
The function $x(t)$ is Riemann integrable since, as an element in $D[0,1]$, it is a.e.\ continuous and bounded on $[0,1]$. Riemann integrability implies that for any $\epsilon>0$, one can choose a partition $\mathcal{T}$ so that
\begin{equation} \label{eq:Riemann 2}
0\le \sum_{i=1}^kM_i(t_i-t_{i-1})- \sum_{i=1}^k m_i(t_i-t_{i-1})< \epsilon.
\end{equation}
Modify the partition, if necessary, so that all the  $t_i$'s are  at continuity points of $x(t)$, without changing (\ref{eq:Riemann 2}). This is possible since $x(t)$ has at most countable discontinuity points and is bounded.
By the characterization of convergence in $D[0,1]$ with $M_2$ topology, we have
\begin{align}\label{eq:Riemann 3}
\lim_n\sum_{i=1}^k m_{i,n}(t_i-t_{i-1})=\sum_{i=1}^k m_{i}(t_i-t_{i-1}),\notag\\
\lim_n\sum_{i=1}^k M_{i,n}(t_i-t_{i-1})=\sum_{i=1}^k M_{i}(t_i-t_{i-1}).
\end{align}
Combining (\ref{eq:Riemann 1}), (\ref{eq:Riemann 2}) and (\ref{eq:Riemann 3}) concludes that $\limsup_n|\int_0^1 x_n(t)dt- \int_0^1 x(t)dt|\le \epsilon$.
\end{proof}

\subsection{Results in the Gaussian subordination case}
Let
\begin{equation}\label{eq:Z vec}
\{\mathbf{Z}_i=(Z_{i,1},\ldots, Z_{i,J}), ~ i\in \mathbb{Z}\}
\end{equation}
be an $\mathbb{R}^J$-valued Gaussian stationary process satisfying $\E Z_{i,j}=0$ for any $i,j$. Define
\begin{equation}\label{e:VZ}
\mathbf{Z}_p^q= \left(\mathbf{Z}_p,\ldots, \mathbf{Z}_q \right).
\end{equation}
We shall view $\mathbf{Z}_p^q$ as a vector of dimension $J\times (q-p+1)$ involving observations from time $p$ to time $q$.
The covariance matrix of $\mathbf{Z}_1^m$ will be written  for convenience  as a four-dimensional array involving $i_1,i_2,j_2,j_2$:
\begin{equation}\label{eq:Sigma_m}
\Sigma_m=\Big(\gamma_{j_1,j_2}(i_2-i_1):= \E Z_{i_1,j_1}Z_{i_2,j_2}\Big)_{1\le i_1,i_2\le m, 1\le j_1,j_2\le J} .
\end{equation}
We assume throughout that $\Sigma_m$ is non-singular for every $m\in \mathbb{Z}_+$.
The cross-block covariance matrix between $\mathbf{Z}_{1}^m$ and $\mathbf{Z}_{k+1}^{k+m}$ is
\begin{equation}\label{eq:Sigma_k,m}
\Sigma_{k,m}=\Big(\gamma_{j_1,j_2}(i_2+k-i_1):= \E Z_{i_1,j_1}Z_{i_2+k,j_2}\Big)_{1\le i_1,i_2\le m, 1\le j_1,j_2\le J}.
\end{equation}
Let
 $\rho(\cdot,\cdot)$ denote the canonical correlation (maximum correlation coefficient) between $L^2(\Omega)$ random vectors $\mathbf{U}=(U_1,\ldots,U_p)$ and $\mathbf{V}=(V_1,\ldots,V_q)$. Let $\langle\cdot,\cdot \rangle$ denote the inner product in an Euclidean space of a suitable dimension. Then
\begin{equation}\label{eq:rho}
\rho(\mathbf{U},\mathbf{V})=\sup_{\mathbf{x}\in \mathbb{R}^p, \mathbf{y}\in \mathbb{R}^q} \left|\Corr\Big(\langle\mathbf{x},\mathbf{U}\rangle, \langle\mathbf{y}, \mathbf{V}\rangle\Big)\right|.
\end{equation}
Let $\rho_{k,m}$ be the between-block canonical correlation:
\begin{equation}\label{eq:spec radius}
\rho_{k,m}=\rho\left(\mathbf{Z}_1^m, \mathbf{Z}_{k+1}^{k+m}\right).
\end{equation}

We now introduce the  assumptions for the self-normalized block sampling procedure.  $\{X_i\}$ is  the stationary process (time series) we observe.
\bigskip

\fbox{\parbox{\textwidth}{
\begin{enumerate}[\textbf{A}1.]
\item  $X_i=G(\mathbf{Z}_i,\ldots,\mathbf{Z}_{i-l})=G(\mathbf{Z}_{i-l}^{i})$ with mean $\mu=\E X_i$, where $\{\mathbf{Z}_i\}$ is a vector-valued stationary Gaussian process as in (\ref{eq:Z vec}), and $l$ is a fixed non-negative integer.
\item We have  weak convergence in $D[0,1]$ endowed with the $M_2$ topology for the partial sum:
\[
\left\{\frac{1}{n^{H}\ell(n)}(S_{\lfloor nt \rfloor} - n\mu),\ 0 \leq t \leq 1\right\} \Rightarrow \left\{Y(t),\ 0 \leq t \leq 1\right\},
\]
for some nonzero $H$-sssi  process $Y(t)$, where $0<H<1$ and $\ell(\cdot)$ is a slowly varying function.
\item As $n\rightarrow\infty$, the block size  $b_n\rightarrow\infty$, $b_n=o(n)$, and satisfies
\begin{equation}\label{eq:block cond gen}
\sum_{k=0}^{n} \rho_{k,l+b_n}=o(n),
\end{equation}
where $\rho_{k,m}$ is the between-block canonical correlation defined in (\ref{eq:spec radius}).
\end{enumerate}
}}

\bigskip
\begin{remark}
The data-generating specification in A1 allows us to
get a variety of limits in A2, covering short-range dependence, long-range dependence,  and heavy tails. When the covariance function of $X(n)$ is absolutely summable (short-range dependence), one typically gets in A2 convergence to Brownian motion (see, e.g., \citet{breuer:major:1983:central}, \citet{ho:sun:1987:central} and \citet{chambers:slud:1989:central}).
When the covariance of $X(n)$ is regularly varying of  order between $-1$ and $0$ (long-range dependence), one may get in A2 convergence to  the Hermite-type  processes  (see, e.g., \citet{Taqqu:1975},  \citet{Dobrushin:Major:1979}, \citet{taqqu:1979:convergence} and \citet{arcones:1994:limit}).

Moreover, as shown in \citet{sly:heyde:2008:nonstandard}  in the case $J=1$,  when $G(\cdot)$ is chosen such that $X(n)$ is short-range dependent and  heavy-tailed, so that $X(n)$ has infinite variance but finite mean,  one can obtain in  A2, convergence to an infinite-variance $\alpha$-stable L\'evy process; if $X(n)$ is long-range dependent and heavy-tailed, then the limit may be a finite-variance Hermite process, even though $X(n)$ may have infinite variance. All these situations are allowed under Assumptions A1--A3.

For sufficient conditions for Assumption A3 to hold, see  Proposition \ref{Pro:new bound} and Section \ref{sec:ass A3}.
\end{remark}
Since the denominators in (\ref{eqn:general self norm limit}) are nonzero almost surely,
Assumption A2, Lemma \ref{Lem:int cont} and the Continuous Mapping Theorem imply the following (see \citet{kallenberg:2006:foundations}, Corollary 4.5):
\begin{lemma}\label{Lem:cont mapping}
$T_{i,b_n}^*$ in (\ref{eqn:TiBn}) converges in distribution to $T$ in (\ref{eqn:general self norm limit}).
\end{lemma}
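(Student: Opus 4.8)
The plan is to recognize that $T^*_{i,b_n}$ is just the statistic $T_n^*$ of (\ref{eqn:general self norm limit}) evaluated on a block of length $b_n$, so that the lemma follows from (\ref{eqn:general self norm limit}) read along the integer sequence $b_n\to\infty$, and then to record the continuous-mapping argument behind (\ref{eqn:general self norm limit}). First, two reductions. Both numerator and denominator of $T^*_{i,b_n}$ are unchanged when every $X_j$ is replaced by $X_j-\mu$ — the self-normalizer $D_{i,b_n}$ is translation invariant and the numerator becomes the centred partial sum — and Assumption A2 is preserved under this shift, so we may assume $\mu=0$. Since $T^*_{i,b_n}$ is the same measurable functional of $(X_i,\dots,X_{i+b_n-1})$ as $T^*_{1,b_n}$ is of $(X_1,\dots,X_{b_n})$, stationarity gives $T^*_{i,b_n}\overset{d}{=}T^*_{1,b_n}$; and comparing (\ref{eqn:TiBn}) at $i=1$ with (\ref{eqn:Tn}) and (\ref{eq:D_n}) shows $T^*_{1,b_n}$ is exactly the statistic $T^*_m$ of (\ref{eqn:general self norm limit}) with $m=b_n$. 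It therefore suffices to prove $T^*_m\ConvD T$ as the integer $m\to\infty$.

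For the latter, set $Y_m(t)=m^{-H}\ell(m)^{-1}S_{1,\lfloor mt\rfloor}$, $t\in[0,1]$, a random element of $D[0,1]$; Assumption A2 gives $Y_m\Rightarrow Y$ in $(D[0,1],M_2)$. The normalized numerator of $T^*_m$ is $Y_m(1)$. For the normalizer, using $S_{1,0}=0$ and that the $k=m$ summand vanishes, one checks that the defining Riemann sum is exactly an integral of a step function,
\[
\frac{D_m^2}{m^{2H}\ell(m)^2}=\frac1m\sum_{k=1}^{m}\Bigl(Y_m\bigl(\tfrac{k}{m}\bigr)-\tfrac{k}{m}\,Y_m(1)\Bigr)^2=\int_0^1\Bigl(Y_m(t)-\tfrac{\lfloor mt\rfloor}{m}\,Y_m(1)\Bigr)^2dt ,
\]
so $T^*_m=Y_m(1)/\Psi_m(Y_m)$ with $\Psi_m(x):=\bigl(\int_0^1(x(t)-\tfrac{\lfloor mt\rfloor}{m}x(1))^2dt\bigr)^{1/2}$; since $|\lfloor mt\rfloor/m-t|\le m^{-1}$, the quantity $\Psi_m(x)$ differs from $\Psi(x):=\bigl(\int_0^1(x(t)-tx(1))^2dt\bigr)^{1/2}$ — the functional $D$ of (\ref{eq:D}) — by at most $Cm^{-1}\|x\|_\infty^2$.

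To pass to the limit I would invoke Skorokhod's representation theorem, which applies because $(D[0,1],M_2)$ is Polish, getting $\widetilde Y_m\overset{d}{=}Y_m$ and $\widetilde Y\overset{d}{=}Y$ on one space with $\widetilde Y_m\to\widetilde Y$ a.s.\ in $M_2$. Work on the a.s.\ event that $\widetilde Y$ is continuous at $t=0$ and $t=1$ (an $H$-sssi process has no fixed discontinuities) and $\Psi(\widetilde Y)=D>0$ (the Vervaat argument recorded below (\ref{eq:D})). Since $M_2$-convergence forces pointwise convergence at every continuity point of the limit, $\widetilde Y_m(1)\to\widetilde Y(1)$ and $\widetilde Y_m(t)\to\widetilde Y(t)$ for a.e.\ $t$, and moreover $\sup_m\|\widetilde Y_m\|_\infty<\infty$; dominated convergence (or the supremum/infimum Riemann-sum argument of Lemma \ref{Lem:int cont}) then yields $\int_0^1(\widetilde Y_m(t)-t\widetilde Y_m(1))^2dt\to D^2$, while the $\lfloor mt\rfloor/m$-versus-$t$ discrepancy is $O(\|\widetilde Y_m\|_\infty^2/m)\to0$. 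Hence $\Psi_m(\widetilde Y_m)\to D>0$ and $\widetilde Y_m(1)/\Psi_m(\widetilde Y_m)\to\widetilde Y(1)/D$ a.s.; as the left side is distributed as $T^*_m$ and the right side as $T=Y(1)/D$, we conclude $T^*_m\ConvD T$. Since $T^*_{1,b_n}$ is this statistic at $m=b_n\to\infty$ and $T^*_{i,b_n}\overset{d}{=}T^*_{1,b_n}$, the lemma follows.

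The argument is essentially routine — it is the block analogue of the already-asserted convergence (\ref{eqn:general self norm limit}) — and I do not expect a genuine obstacle here. The one point demanding care is the continuity in the $M_2$ topology of the normalizing functional $x\mapsto\int_0^1(x(t)-tx(1))^2dt$ on the set of c\`adl\`ag functions continuous at $t=1$, since nonlinear composition does not preserve $M_2$-convergence in general; this is exactly where Lemma \ref{Lem:int cont} (or the a.e.-convergence-plus-uniform-boundedness reasoning above) is used, together with $P(D>0)=1$ and $P(Y\text{ continuous at }1)=1$ to place $Y$ in the continuity set of the functional. The substantive difficulties of the paper lie elsewhere — in verifying Assumption A3 and in the block-sampling consistency (\ref{eq:consistency}).
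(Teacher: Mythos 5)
Your proof is correct and follows essentially the same route as the paper, which disposes of this lemma in one line by combining Assumption A2, Lemma \ref{Lem:int cont}, the almost-sure positivity of the denominator, and the (extended) Continuous Mapping Theorem. Your version simply makes that argument explicit — via stationarity, the identification $T^*_{1,b_n}=T^*_{b_n}$, Skorokhod representation, and the Riemann-sum/integral comparison — and your care about the $M_2$-continuity of the nonlinear normalizing functional is exactly the point Lemma \ref{Lem:int cont} is there to handle.
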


The following result allows us to relate the correlation of nonlinear functions to the correlation of linear functions. 
\begin{lemma}\label{Lem:canonical}
Let $(\mathbf{Z_i})_{i\in \mathbb{Z}}$ be a centered $\mathbb{R}^J$-valued Gaussian stationary process as in (\ref{eq:Z vec}),  and let $\mathbf{Z}_p^q$ be defined as in (\ref{e:VZ}).  Let $\mathcal{F}_{Jm}$ be the set of all functions $F$ on $\mathbb{R}^{Jm}$ satisfying $\E F(\mathbf{Z}_1^m)^2<\infty$.  Then for $k\ge m$, one has
\begin{equation}\label{eq:maximal corr eq}
\sup_{F,G\in \mathcal{F}_{Jm}}\left|\Corr\big(F(\mathbf{Z}_1^m),G(\mathbf{Z}_{k+1}^{k+m})\big)\right|=  \rho\left(\mathbf{Z}_1^m, \mathbf{Z}_{k+1}^{k+m}\right)=\rho_{k,m}.
\end{equation}
\end{lemma}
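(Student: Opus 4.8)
The plan is to prove the ``$\leq$'' inequality in (\ref{eq:maximal corr eq}) by the classical Hermite-expansion (Mehler) argument, after first reducing the pair of Gaussian blocks to a canonical form; the ``$\geq$'' inequality is trivial, since the linear functionals $\mathbf{u}\mapsto\langle\mathbf{x},\mathbf{u}\rangle$ belong to $\mathcal{F}_{Jm}$ and the supremum defining $\rho(\mathbf{Z}_1^m,\mathbf{Z}_{k+1}^{k+m})$ in (\ref{eq:rho}) is taken over exactly such functionals. For the reduction, note that by stationarity both $\mathbf{Z}_1^m$ and $\mathbf{Z}_{k+1}^{k+m}$ have covariance matrix $\Sigma_m$, which is non-singular by hypothesis. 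I would invoke canonical correlation analysis---whiten each block, then apply the singular value decomposition to the cross-covariance of the whitened blocks---to obtain invertible linear maps $A,B$ on $\mathbb{R}^{Jm}$ such that $\mathbf{U}:=A\mathbf{Z}_1^m$ and $\mathbf{V}:=B\mathbf{Z}_{k+1}^{k+m}$ each have i.i.d.\ $N(0,1)$ coordinates and $\Cov(U_j,V_{j'})=\delta_{jj'}\,r_j$ for some $1\geq r_1\geq\cdots\geq r_{Jm}\geq0$, with $r_1=\rho(\mathbf{Z}_1^m,\mathbf{Z}_{k+1}^{k+m})=\rho_{k,m}$ by the variational formula (\ref{eq:rho}). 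Since $A,B$ are invertible, $\{F(\mathbf{Z}_1^m):F\in\mathcal{F}_{Jm}\}=\{F(\mathbf{U}):F\in\mathcal{F}_{Jm}\}$ and similarly for the second block, and since $\Corr$ is invariant under centering, it remains to bound $|\Corr(F(\mathbf{U}),G(\mathbf{V}))|$ by $r_1$ over mean-zero $F,G\in\mathcal{F}_{Jm}$.

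Next I would run the Hermite expansion. The joint Gaussianity of $(\mathbf{U},\mathbf{V})$ together with the diagonal cross-covariance above makes the pairs $(U_j,V_j)$, $j=1,\dots,Jm$, mutually independent, each bivariate normal with correlation $r_j$; equivalently $V_j=r_jU_j+\sqrt{1-r_j^2}\,W_j$ with $(W_j)$ i.i.d.\ $N(0,1)$ independent of $(U_j)$. Let $\{H_\alpha(\mathbf{u})=\prod_j h_{\alpha_j}(u_j):\alpha\in\mathbb{Z}_{\ge0}^{Jm}\}$ be the tensorized orthonormal Hermite basis of $L^2$ of the standard Gaussian on $\mathbb{R}^{Jm}$. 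The one-dimensional eigenfunction identity $\E[h_n(ru+\sqrt{1-r^2}W)\mid u]=r^nh_n(u)$ (Mehler's formula / the Ornstein--Uhlenbeck semigroup), applied coordinatewise and using independence, gives $\E[H_\beta(\mathbf{V})\mid\mathbf{U}]=r^\beta H_\beta(\mathbf{U})$ with $r^\beta:=\prod_j r_j^{\beta_j}$. Hence if $G(\mathbf{V})=\sum_\beta b_\beta H_\beta(\mathbf{V})$ has mean zero (so $b_0=0$), then $\E[G(\mathbf{V})\mid\mathbf{U}]=\sum_{|\beta|\ge1}b_\beta r^\beta H_\beta(\mathbf{U})$, and since $0\le r^\beta\le r_1^{|\beta|}\le r_1$ for $|\beta|\ge1$ (recall $r_1\le1$), Parseval's identity yields $\|\E[G(\mathbf{V})\mid\mathbf{U}]\|_2^2=\sum_{|\beta|\ge1}b_\beta^2 r^{2\beta}\le r_1^2\|G\|_2^2$.

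Finally, for mean-zero $F(\mathbf{U}),G(\mathbf{V})$ the tower property and Cauchy--Schwarz give
\[
|\E[F(\mathbf{U})G(\mathbf{V})]| = |\E[F(\mathbf{U})\,\E[G(\mathbf{V})\mid\mathbf{U}]]| \le \|F\|_2\,\|\E[G(\mathbf{V})\mid\mathbf{U}]\|_2 \le r_1\|F\|_2\|G\|_2,
\]
so $|\Corr(F(\mathbf{U}),G(\mathbf{V}))|\le r_1=\rho_{k,m}$, which combined with the trivial direction proves (\ref{eq:maximal corr eq}). I do not anticipate a genuine obstacle here: the only point requiring care is the canonical-correlation reduction in the first step, where non-singularity of $\Sigma_m$ is used and where one must check that the largest canonical correlation coincides with $\rho_{k,m}$ as defined in (\ref{eq:rho}); everything afterwards is the textbook Mehler-formula computation. (The hypothesis $k\ge m$ plays no role in this argument---the blocks may even overlap, in which case both sides equal $1$---but it is the range relevant to Assumption A3.)
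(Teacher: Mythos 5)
Your proposal is correct, and it is in fact more than the paper offers: the paper's entire proof of Lemma \ref{Lem:canonical} is a citation of the classical Gaussian maximal correlation equality (Theorem 1 of Kolmogorov and Rozanov (1960), or Theorem 10.11 of Janson (1997)), whereas you supply a self-contained proof of that equality. Your argument --- the trivial ``$\geq$'' direction from linear functionals, the canonical-correlation reduction (whitening both blocks with $\Sigma_m^{-1/2}$, which is where non-singularity of $\Sigma_m$ enters, followed by an SVD of the whitened cross-covariance to make the pairs $(U_j,V_j)$ independent bivariate Gaussians with correlations $r_1\ge\cdots\ge r_{Jm}\ge 0$ and $r_1=\rho_{k,m}$), and then the Mehler/Hermite eigenfunction bound $\|\E[G(\mathbf{V})\mid\mathbf{U}]\|_2\le r_1\|G\|_2$ for mean-zero $G$ followed by Cauchy--Schwarz --- is precisely the standard proof of the cited theorem, and every step checks out. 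What the citation buys the paper is brevity; what your version buys is transparency about exactly which hypotheses matter (non-singularity of $\Sigma_m$ for the whitening, joint Gaussianity for the independence of the canonical pairs and for Mehler's formula), and it makes explicit your correct observation that $k\ge m$ is irrelevant to the equality itself and is only the range needed later for Assumption A3. No gaps.
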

\begin{proof}
The  equality is the well-known Gaussian maximal correlation equality. See, e.g., Theorem 1  of \citet{kolmogorov:razanov:1960:strong} or Theorem 10.11 of \citet{janson:1997:gaussian}.
\end{proof}


Our goal is to show  that (\ref{eq:consistency}) holds, namely, $\widehat{F}_{n,b_n}$ is a consistent estimator of $P(T_n^*\le x)$. This will be a consequence of the following theorem.
\begin{theorem}\label{Thm:main gaussian}
Assume that Assumptions A1--A3 hold. Let $F(x)$ be the CDF (cumulative distribution function) of  $T$ in (\ref{eqn:general self norm limit}), and let $\widehat{F}_{n,b_n}(x)$ be as in (\ref{eq:F_N^*}).
 As $n\rightarrow\infty$,  we have
\begin{equation}\label{eq:F conv mixing}
\widehat{F}_{n,b_n}(x)\ConvP F(x), \quad  x\in C(F),
\end{equation}
where  $C(F)$ denotes the set of continuity points of $F(x)$.
If  $F(x)$ is continuous, then (\ref{eq:F conv mixing}) can be strengthened to
\begin{equation}\label{eq:F conv uniform mixing}
\sup_x \left|\widehat{F}_{n,b_n}(x) - F(x) \right|\rightarrow 0 \quad \text{ in probability}.
\end{equation}
\end{theorem}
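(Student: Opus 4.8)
The plan is to establish (\ref{eq:F conv mixing}) by a standard two-step argument: first show that the empirical distribution $\widehat F_{n,b_n}^*(x)$ built from the infeasible statistics $T_{i,b_n}^*$ (which use the true mean $\mu$) converges in probability to $F(x)$, and then show that replacing $\mu$ by $\bar X_n$ is asymptotically negligible, so that $\widehat F_{n,b_n}(x)$ has the same limit. For the first step, since $\widehat F_{n,b_n}^*(x)$ is an average of the stationary sequence $\I(T_{i,b_n}^*\le x)$, it suffices to control its mean and variance. By Lemma \ref{Lem:cont mapping}, $T_{i,b_n}^*\ConvD T$, hence $\E\widehat F_{n,b_n}^*(x)=P(T_{1,b_n}^*\le x)\to F(x)$ for every $x\in C(F)$. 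For the variance, write
\[
\Var\big(\widehat F_{n,b_n}^*(x)\big)=\frac{1}{(n-b_n+1)^2}\sum_{i,j}\Cov\big(\I(T_{i,b_n}^*\le x),\,\I(T_{j,b_n}^*\le x)\big).
\]
Each covariance is bounded by $1$, and for $|i-j|\ge l+b_n$ the two indicators are functions of disjoint blocks $\mathbf Z_{i-l}^{i+b_n-1}$ and $\mathbf Z_{j-l}^{j+b_n-1}$; by Lemma \ref{Lem:canonical} the corresponding covariance is bounded (up to a constant coming from the variances of the indicators) by the between-block canonical correlation $\rho_{|i-j|,\,l+b_n}$. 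Splitting the double sum into the ``near-diagonal'' part $|i-j|<l+b_n$, contributing $O\big((n-b_n)(l+b_n)\big)=O(n b_n)$ after dividing by $(n-b_n+1)^2$, which is $o(1)$ since $b_n=o(n)$, and the ``far'' part bounded by $2(n-b_n+1)\sum_{k=0}^n\rho_{k,l+b_n}$, which divided by $(n-b_n+1)^2$ is $o(1)$ by Assumption A3, gives $\Var(\widehat F_{n,b_n}^*(x))\to 0$. Chebyshev then yields $\widehat F_{n,b_n}^*(x)\ConvP F(x)$ at each continuity point.

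The second step is to pass from $T_{i,b_n}^*$ to $T_{i,b_n}$. Note the numerators of $T_{i,b_n}$ and $T_{i,b_n}^*$ differ by $b_n(\bar X_n-\mu)$, while the denominators $D_{i,b_n}$ are identical (the centering $\bar X_n$ cancels in the self-normalizer, which only depends on increments $S_{i,k}-\frac{k-i+1}{b_n}S_{i,i+b_n-1}$). Hence $T_{i,b_n}=T_{i,b_n}^*+b_n(\bar X_n-\mu)/D_{i,b_n}$, so for any $\delta>0$,
\[
\widehat F_{n,b_n}^*(x-\delta)-R_n \le \widehat F_{n,b_n}(x) \le \widehat F_{n,b_n}^*(x+\delta)+R_n,\qquad
R_n:=\frac{1}{n-b_n+1}\sum_{i=1}^{n-b_n+1}\I\!\left(\frac{b_n\,|\bar X_n-\mu|}{D_{i,b_n}}>\delta\right).
\]
It therefore suffices to show $R_n\ConvP 0$ for every fixed $\delta>0$. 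Writing $b_n|\bar X_n-\mu|/D_{i,b_n} = \big(b_n^{1-H}\ell(b_n)^{-1}\big)^{-1}\cdot\big(n^{1-H}\ell(n)^{-1}|\bar X_n-\mu|\big)\cdot\big(n^{1-H}\ell(n)^{-1}/(b_n^{1-H}\ell(b_n)^{-1})\big)\cdot\big(b_n^{-H}\ell(b_n)^{-1}D_{i,b_n}\big)^{-1}$ — more cleanly, $b_n|\bar X_n-\mu| = b_n\, n^{H-1}\ell(n)\,\big(n^{1-H}\ell(n)^{-1}|\bar X_n-\mu|\big)$, and $n^{1-H}\ell(n)^{-1}(\bar X_n-\mu)=n^{-H}\ell(n)^{-1}(S_{1,n}-n\mu)\ConvD Y(1)$ is $O_p(1)$ by A2 — one sees the numerator is of order $b_n n^{H-1}\ell(n)$ in probability. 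Meanwhile $b_n^{-H}\ell(b_n)^{-1}D_{i,b_n}\ConvD D$ with $D>0$ a.s., so $D_{i,b_n}$ is of order $b_n^{H}\ell(b_n)$ and is not too small. The ratio is then of order $b_n^{1-H}\ell(b_n)^{-1}\,n^{H-1}\ell(n) = (b_n/n)^{1-H}\,\ell(b_n)^{-1}\ell(n)\to 0$ since $b_n=o(n)$, $H<1$, and by the uniform convergence theorem for regularly varying functions $\ell(n)/\ell(b_n)=(n/b_n)^{o(1)}$, so $(b_n/n)^{1-H}\ell(n)/\ell(b_n)\to 0$. One then makes this rigorous by bounding $\E R_n = P\big(b_n|\bar X_n-\mu|/D_{1,b_n}>\delta\big)$: split according to whether $D_{1,b_n}\ge \epsilon\, b_n^H\ell(b_n)$ (handled by the above rate estimate on the numerator, using A2 for tightness of $n^{-H}\ell(n)^{-1}(S_{1,n}-n\mu)$), or $D_{1,b_n}<\epsilon\, b_n^H\ell(b_n)$ (which has probability $P(b_n^{-H}\ell(b_n)^{-1}D_{1,b_n}<\epsilon)\to P(D\le\epsilon)$, made small by choosing $\epsilon$ small since $D>0$ a.s.). Then choose $\delta$ among continuity points of $F$; letting $\delta\downarrow 0$ through $C(F)$ and using the already-established convergence $\widehat F_{n,b_n}^*(x\pm\delta)\ConvP F(x\pm\delta)$ together with continuity of $F$ at $x$ gives $\widehat F_{n,b_n}(x)\ConvP F(x)$.

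Finally, if $F$ is continuous everywhere, the pointwise convergence upgrades to the uniform statement (\ref{eq:F conv uniform mixing}) by the classical P\'olya-type argument: $\widehat F_{n,b_n}$ is monotone and $F$ is continuous (hence uniformly continuous on $\mathbb R$, with $F(-\infty)=0$, $F(+\infty)=1$), so fixing a finite grid $-\infty=x_0<x_1<\dots<x_N=+\infty$ on which $F$ increases by at most $\eta$ between consecutive points, monotonicity sandwiches $\sup_x|\widehat F_{n,b_n}(x)-F(x)|$ by $\max_{0\le j\le N}|\widehat F_{n,b_n}(x_j)-F(x_j)|+\eta$, and the maximum over the finite grid tends to $0$ in probability by the pointwise result. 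I expect the main obstacle to be the negligibility step $R_n\ConvP0$: one must control the self-normalizer $D_{i,b_n}$ uniformly enough — in the sense that it is not abnormally small — which relies on the a.s.\ positivity of the limit $D$ (justified in the excerpt via \citet{vervaat:1985:sample}) combined with the convergence in distribution of the normalized denominator, and on handling the slowly varying factors $\ell(n)/\ell(b_n)$ via the uniform convergence theorem for regular variation. The covariance-summability argument in step one is routine once Lemma \ref{Lem:canonical} converts the nonlinear-functional covariance into the Gaussian canonical correlation controlled by A3.
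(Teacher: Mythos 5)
Your proposal is correct and follows essentially the same route as the paper: the bias--variance argument for $\widehat F^*_{n,b_n}$ via Lemma \ref{Lem:cont mapping}, the covariance bound through the Gaussian canonical correlation of Lemma \ref{Lem:canonical} combined with Assumption A3, the negligibility of $b_n(\bar X_n-\mu)/D_{i,b_n}$ using the a.s.\ positivity of $D$ and the rate $(b_n/n)^{1-H}\ell(n)/\ell(b_n)\to 0$, and a P\'olya-type upgrade to uniformity. The only (harmless) variations are that you control the remainder term by bounding $\E R_n$ and applying Markov's inequality, where the paper instead re-runs the Step-1 variance argument for the empirical distribution of $(b_n^H\ell(b_n))^{-1}D_{i,b_n}$, and that you use a finite grid for the uniform statement where the paper uses a subsequence argument.
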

\begin{proof}
~\\
\noindent\emph{Step 1.}
Let $\widehat{F}_{n,b_n}^*(x)$ be as in (\ref{eq:F_N}). To prove (\ref{eq:F conv mixing}), we  first show that
\begin{equation}\label{eq:reduced goal}
\widehat{F}_{n,b_n}^*(x)\ConvP F(x),\quad x\in C(F),
\end{equation}
where we have replaced $\widehat{F}_{n,b_n}(x)$ by $\widehat{F}_{n,b_n}^*(x)$.
A bias-variance decomposition yields:
\begin{align*}
\E \left( \Big[\widehat{F}_{n,b_n}^*(x) - F(x)\Big]^2\right)
&= [\E \widehat{F}_{n,b_n}^*(x)]^2 -\E[ 2  F(x) \widehat{F}_{n,b_n}^*(x) ]+ F(x)^2 + \E [\widehat{F}_{n,b_n}^*(x)^2 ]-[\E\widehat{F}_{n,b_n}^*(x)]^2
\\&= 
 \Big[\E\widehat{F}_{n,b_n}^*(x) - F(x)\Big]^2+ \Big[\E[\widehat{F}_{n,b_n}^*(x)^2]-[\E\widehat{F}_{n,b_n}^*(x)]^2\Big]\\
 &
=\big[P(T_{i,b_n}^*\le x)-P(T\le x)\big]^2+
\Var\big[\widehat{F}_{n,b_n}^*(x)\big].
\end{align*}
By Lemma \ref{Lem:cont mapping}, the squared bias $[P(T_{i,b_n}^*\le x)-P(T\le x)]^2$ converges to zero for $x\in C(F)$ as $b_n\rightarrow\infty$.
We thus need to show that $\Var[\widehat{F}_{n,b_n}^*(x)]\rightarrow 0$. By the stationarity of $\{X_i\}$, which implies the stationarity of $\{T_{i,b_n}^*\}$ viewed as a process indexed by $i$, one has
\begin{align}\label{eq:var bound}
\Var[\widehat{F}_{n,b_n}^*(x)]&=\Var\left[ \frac{1}{n-b_n+1}\sum_{i=1}^{n-b_n+1} \I\{T_{i,b_n}^*\le x\} \right]=\frac{1}{(n-b_n+1)^2}\sum_{i,j=1}^{n-b_n+1} \Cov\left[\I\{T_{i,b_n}^*\le x\},\I\{T_{j,b_n}^*\le x\} \right]\notag
 \\&
 \le \frac{2}{n-b_n+1} \sum_{k=0}^{n} \left|\Cov\left[\I\{T_{1,b_n}^*\le x\},\I\{T_{k+1,b_n}^*\le x\}\right]\right|,
\end{align}
since for any covariance function $\gamma(\cdot)$ of a stationary sequence, we have
\[
\sum_{i,j=1}^p |\gamma(i-j)|\le \sum_{|k|<p} (p-|k|) |\gamma(k)| \le 2p\sum_{k=0}^p |\gamma(k)|.
\]
In view of Assumption A1, $X_i$ depends on $\mathbf{Z}_{i},\ldots,\mathbf{Z}_{i-l}$.  By (\ref{eqn:TiBn}),   $T_{i,b_n}^*$  is a function of  $X_i,\ldots,X_{i+b_n-1}$. Hence $T_{1,b_n}^*$ depends not only on $\mathbf{Z}_1,\ldots,\mathbf{Z}_{b_n}$, but also on $\mathbf{Z}_{1-l},\ldots,\mathbf{Z}_{0}$, and $T_{k+1,b_n}^*$ depends on $\mathbf{Z}_{k+1-l},\ldots,\mathbf{Z}_{k+b_n}$.
We shall now apply Lemma \ref{Lem:canonical} with the same $k$ and $m=l+b_n$.
Then
when $k\ge l+ b_n$, one has
\begin{align}\label{eq:cov bound}
\left|\Cov[\I\{T_{1,b_n}^*\le x\},\I\{T_{k+1,b_n}^*\le x\}]\right|\le \frac{1}{4}\left|\Corr[\I\{T_{1,b_n}^*\le x\},\I\{T_{k+1,b_n}^*\le x\}]\right| \le  \frac{1}{4} \rho_{k,b_n+l}
\end{align}
where we have used the following fact\footnote{If $0\le X\le 1$, then $\mu =\E X \in [0,1]$,  $\E X^2\le \mu$ and $\Var[X] \le \mu-\mu^2$ is maximized at $\mu=1/2$, so that $\Var[X]\le 1/4$ (for more general results, see \citet{dharmadhikari:1989:upper}, Lemma 2.2).}:  if  $0\le X\le 1$,  then $\Var[X]\le 1/4$. We have
\begin{align}\label{eq:var bound gaussian}
\Var[\widehat{F}_{n,b_n}^*(x)]&\le \frac{1}{2(n-b_n+1)} \sum_{k=0}^n\rho_{k,b_n+l},
\end{align}
which converges to zero
because of Assumption A3. Hence $\widehat{F}_{n,b_n}^*(x)\ConvP F(x)$ for  $x\in C(F)$. Step 1 of the proof is now complete.

\medskip
\noindent\emph{Step 2.}
We now show that
\[
\widehat{F}_{n,b_n}(x)\ConvP F(x)\qquad \text{for  $x\in C(F)$,}
\]
  that is, we go from (\ref{eq:reduced goal}) to (\ref{eq:F conv mixing}). To do so,  we follow the proof of Theorem 11.3.1 of \citet{politis:1999:subsampling}, and express (\ref{eq:F_N^*}) as
\begin{equation}\label{eq:F_n rewrite}
\widehat{F}_{n,b_n}(x)=\frac{1}{n-b_n+1} \sum_{i=1}^{n-b_n+1} \I\{T_{i,b_n}^*\le x+ b_n(\bar{X}_n-\mu)/D_{i,b_n}\},
\end{equation}
where $D_{i,b_n}$ is as in (\ref{eqn:TiBn}). The goal is to show that $b_n(\bar{X}_n-\mu)/D_{i,b_n}$ is negligible. For $\epsilon>0$, define
\begin{align}\label{eq:R_n def}
R_n(\epsilon)
&=\frac{1}{n-b_n+1} \sum_{i=1}^{n-b_n+1} \I\{ b_n(\bar{X}_n-\mu)/D_{i,b_n}\le
\epsilon\}
\\&=\frac{1}{n-b_n+1} \sum_{i=1}^{n-b_n+1}  \I\{ (b_n^{H} \ell(b_n))^{-1}   D_{i,b_n}  \ge \epsilon^{-1} b_n(\bar{X}_n-\mu)(b_n^{H} \ell(b_n))^{-1}\}.\notag
\end{align}
Since $R_n(\epsilon)$ is an average of indicators, we have $R_n(\epsilon)\le 1$. Our goal is to show that $R_n(\epsilon)\ConvP 1$. Note that as $n\rightarrow\infty $,
$$
\frac
{D_{i,b_n}}
{b_n^{H} \ell(b_n)}
 =
\frac
{1}
{b_n^{H} \ell(b_n)}
{\left(b_n^{-1}\sum_{k=i}^{i+b_n-1}\Big(S_{i,k} - b_n^{-1}(k-i-1) S_{i,i+b_n-1}\Big)^2\right)^{1/2}}
$$
converges in distribution to $D$ in (\ref{eq:D}) by  Assumption A2 and continuous mapping. Moreover,   since $b_n=o(n)$, $H<1$  and $n(\bar{X}_n-\mu)n^{-H}\ell(n)^{-1} $  converges in distribution to $Y(1)$ by  Assumption A2, we have
\[
b_n(\bar{X}_n-\mu)(b_n^{H} \ell(b_n))^{-1}=n(\bar{X}_n-\mu)n^{-H}\ell(n)^{-1} ~
\frac{n^{H-1}\ell(n)}{b_n^{H-1} \ell(b_n)} \ConvP 0.
\]
 Hence for any $\delta>0$, with probability tending to $1$ as $n\rightarrow \infty$, one has
\begin{equation}\label{eq:R_n>=}
1\ge R_n(\epsilon)\ge  \frac{1}{n-b_n+1} \sum_{i=1}^{n-b_n+1}\I\{ (b_n^{H} \ell(b_n))^{-1} D_{i,b_n}    \ge \delta \epsilon^{-1} \}.
\end{equation}
Since as  $T_{i,b_n}^*$ in Step 1, $D_{i,b_n}$ is  also a function of  $X_i,\ldots,X_{i+b_n-1}$, we can follow a same argument as in Step 1, replacing $T_{i,b_n}^*$ by $(b_n^{H} \ell(b_n))^{-1} D_{i,b_n}$ to obtain a similar result as in (\ref{eq:reduced goal}), namely that the empirical distribution of $(b_n^{H} \ell(b_n))^{-1} D_{i,b_n}$ converges in probability to that of $D$ at all points of continuity of the distribution of $D$. Therefore
\begin{equation}\label{eq:again apply}
\frac{1}{n-b_n+1} \sum_{i=1}^{n-b_n+1}\I\{ (b_n^{H} \ell(b_n))^{-1} D_{i,b_n}    \ge \delta \epsilon^{-1} \} \ConvP P(D\ge \delta\epsilon^{-1})
\end{equation}
for $\delta\epsilon^{-1}$  at continuity point of the CDF of $D$.
Since $P(D>0)=1$, we can choose $\delta$ small enough to make $P(D\ge \delta\epsilon^{-1})$ as close to $1$ as desired. In view of (\ref{eq:R_n>=}) and (\ref{eq:again apply}), we conclude that as $n\rightarrow\infty$,
\begin{equation}\label{eq:R_n conv 1}
R_n(\epsilon)\ConvP 1
\end{equation}
for any $\epsilon>0$.
Now notice that each summand in the sum  (\ref{eq:F_n rewrite}) satisfies
\begin{eqnarray}\label{eq:to be modified}
 & &\I\{T_{i,b_n}^*\le x+ b_n(\bar{X}_n-\mu)/D_{i,b_n}\}\notag\\
 & = &\Big[\I\{T_{i,b_n}^*\le x+ b_n(\bar{X}_n-\mu)/D_{i,b_n}\}\Big]\Big[\I\{b_n(\bar{X}_n-\mu)/D_{i,b_n}\le \epsilon\}+\I\{b_n(\bar{X}_n-\mu)/D_{i,b_n}>\epsilon\}\Big]
\notag\\
 & \le &\I\{T_{i,b_n}^*\le x+ \epsilon\}+ \I\{b_n(\bar{X}_n-\mu)/D_{i,b_n}>\epsilon\},
\end{eqnarray}
so that by plugging these inequalities in (\ref{eq:F_n rewrite}) and using (\ref{eq:R_n def}),
we get
\[
\widehat{F}_{n,b_n}(x)\le \widehat{F}_{n,b_n}^*(x+\epsilon)+  1-R_n(\epsilon).
\]
But by (\ref{eq:R_n conv 1}), $R_n(\epsilon)\ConvP 1$. So for any $\gamma>0$, one has
$$
\widehat{F}_{n,b_n}(x)\le \widehat{F}_{n,b_n}^*(x+\epsilon)+\gamma
$$
with probability tending to $1$ as $n\rightarrow\infty$.
We can now  use  (\ref{eq:reduced goal}) to replace $\widehat{F}_{n,b_n}^*(x+\epsilon)$ by $F(x+\epsilon)$,
so that for arbitrary $\gamma'>\gamma$, and for any  $x+\epsilon\in C(F)$, one has  $\widehat{F}_{n,b_n}(x)\le F(x+\epsilon)+\gamma'$  with probability tending to $1$ as $n\rightarrow \infty$. Now  letting $\epsilon\downarrow 0$ through $x+\epsilon\in C(F)$ and using the continuity of $F(\cdot)$ at $x$, one gets with probability tending to $1$ that
\begin{equation}\label{eq:upper}
\widehat{F}_{n,b_n}(x)\le F(x)+\gamma'',\quad x\in C(F),
\end{equation}
for any $\gamma''>\gamma'$.

A similar argument, which replaces (\ref{eq:to be modified}) by
\begin{align*}
\I\{T_{i,b_n}\le x\} \geq \I\{T_{i,b_n}^*\le x- \epsilon\} - \I\{b_n(\bar{X}_n-\mu)/D_{i,b_n}<-\epsilon\},
\end{align*}
will show that for any $\gamma''>0$, with probability tending to 1,
\begin{equation}\label{eq:lower}
\widehat{F}_{n,b_n}(x)\ge F(x)-\gamma'',\quad x\in C(F).
\end{equation}
Combining (\ref{eq:upper}) and (\ref{eq:lower}), one gets
\[
P(|\widehat{F}_{n,b_n}(x)-F(x)|\le \gamma'')\rightarrow 1
\]
as $n\rightarrow\infty$, and thus
(\ref{eq:F conv mixing}) holds.

\medskip

\noindent\emph{Step 3.}
We now show (\ref{eq:F conv uniform mixing}). If $F(x)$ is continuous, then by the already established (\ref{eq:F conv mixing}), we have $\widehat{F}_{n,b_n}(x)\rightarrow F(x)$ in probability for any $x\in \mathbb{R}$. Let $n_i$ be an arbitrary subsequence, one can then choose a further subsequence of $n_i$, still denoted as $n_i$, so that $\widehat{F}_{n_i}(x)\rightarrow F(x)$ almost surely for all rational $x$ by a diagonal subsequence argument. Then by Lemma A9.2 (ii) of \citet{gut:2006:probability}, $\sup_{x\in \mathbb{R}}|\widehat{F}_{n_i}(x)-F(x)|\rightarrow 0$ almost surely, and therefore $\sup_{x\in \mathbb{R}}|\widehat{F}_{n}(x)-F(x)|\rightarrow 0$  in probability. Hence (\ref{eq:F conv uniform mixing}) is proved.
\end{proof}

Consistency  (\ref{eq:consistency}) is a simple corollary of Theorem \ref{Thm:main gaussian}.
\begin{corollary}\label{Cor:gaussian}
Assume that  Assumptions A1--A3 hold. Then  as $n\rightarrow\infty$,
\begin{equation}\label{eq:consistency F}
|\widehat{F}_{n,b_n}(x) - P(T_n^* \leq x)|\rightarrow 0   \quad \text{ in probability. }
\end{equation}
for $x\in C(F)$. If $F(x)$ is continuous, then the preceding convergence can be strengthened to
\begin{equation}\label{eq:uniform consistency}
\sup_{x\in \mathbb{R}}|\widehat{F}_{n,b_n}(x) - P(T_n^* \leq x)| \rightarrow 0 \quad \text{ in probability. }
\end{equation}
\end{corollary}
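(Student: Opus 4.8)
The plan is to obtain Corollary \ref{Cor:gaussian} from Theorem \ref{Thm:main gaussian} by combining it with the convergence in distribution $T_n^* \ConvD T$ through a triangle inequality; the authors are right that this is a simple corollary. First I would record that, exactly as in \eqref{eqn:general self norm limit} and in the proof of Lemma \ref{Lem:cont mapping} (now with the full sample playing the role of a single block of size $n$), Assumption A2, Lemma \ref{Lem:int cont} and the Continuous Mapping Theorem give $T_n^* \ConvD T$, where $T=Y(1)/D$ and $D>0$ almost surely, so that $F$ is a genuine CDF. Consequently the \emph{deterministic} sequence $G_n(x):=P(T_n^*\le x)$ satisfies $G_n(x)\to F(x)$ at every $x\in C(F)$.

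Next, Theorem \ref{Thm:main gaussian} supplies $\widehat{F}_{n,b_n}(x)\ConvP F(x)$ for $x\in C(F)$. For such $x$, the triangle inequality
\[
\big|\widehat{F}_{n,b_n}(x) - P(T_n^*\le x)\big| \le \big|\widehat{F}_{n,b_n}(x) - F(x)\big| + \big|F(x) - G_n(x)\big|
\]
bounds the left-hand side by a term tending to zero in probability plus a term tending to zero deterministically, which gives \eqref{eq:consistency F}.

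For the uniform statement \eqref{eq:uniform consistency}, assume $F$ is continuous. Then $C(F)=\mathbb{R}$, and since $G_n\to F$ pointwise with $F$ continuous, P\'olya's theorem yields $\sup_x|G_n(x)-F(x)|\to 0$. Meanwhile \eqref{eq:F conv uniform mixing} of Theorem \ref{Thm:main gaussian} gives $\sup_x|\widehat{F}_{n,b_n}(x)-F(x)|\to 0$ in probability. Taking the supremum over $x$ in the displayed triangle inequality and combining the two bounds produces \eqref{eq:uniform consistency}.

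There is no substantive obstacle; the only points deserving a word of care are (i) justifying $T_n^* \ConvD T$, which is immediate from A2 together with the continuous-mapping argument already used for the block statistics $T_{i,b_n}^*$, and (ii) invoking P\'olya's theorem to upgrade the deterministic pointwise convergence $G_n\to F$ to uniform convergence in the continuous-$F$ case.
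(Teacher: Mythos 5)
Your proof is correct and takes essentially the same route as the paper: the identical triangle inequality combined with Theorem \ref{Thm:main gaussian} and the convergence $T_n^*\ConvD T$ already established in (\ref{eqn:general self norm limit}). For the uniform statement you invoke P\'olya's theorem, which is exactly the fact the paper cites as Lemma A9.2(ii) of \citet{gut:2006:probability}.
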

\begin{proof}
The first result (\ref{eq:consistency F}) follows directly from the triangle inequality
\[
|\widehat{F}_{n,b_n}(x) - P(T_n^* \leq x)|\le |\widehat{F}_{n,b_n}(x) - F(x)| + |P(T_n^* \leq x)-F(x)|,
\]
where $x\in C(F)$ and $F(x)=P(T\le x)$, by combining Theorem \ref{Thm:main mixing} or \ref{Thm:main gaussian} with (\ref{eqn:general self norm limit}).  For the second result (\ref{eq:uniform consistency}), one  uses also the fact that (\ref{eqn:general self norm limit}) implies  $\sup_{x\in \mathbb{R}}|P(T_n^* \leq x)-F(x)|\rightarrow 0$ as $n\rightarrow \infty$ if $F(x)$ is continuous (see again Lemma A9.2 (ii) of \citet{gut:2006:probability}).
\end{proof}

\citet{bai:taqqu:2015:canonical} recently proved the following proposition, showing  that the bound (\ref{eq:block cond gen}) holds for a large class of models with long-range dependence. Thus, for these models, one has the freedom to choose any $b_n=o(n)$, irrespective of the long-range dependence parameter $H$.
\begin{proposition}[\citet{bai:taqqu:2015:canonical}, Theorem 2.2 and 2.3]\label{Pro:new bound}
Consider the case $J=1$. Suppose that the spectral density of the underlying Gaussian $\{Z_i\}$ is given by
\[
f(\lambda)=f_H(\lambda)f_0(\lambda),
\]
where $f_H(\lambda)=|1-e^{i\lambda}|^{-2H+1}$, $1/2<H<1$, and  $f_0(\lambda)$ is a spectral density  which corresponds to a covariance function (or  Fourier coefficient)
$\gamma_0(n)=\int_{-\pi}^{\pi} f_0(\lambda) e^{in\lambda}d\lambda$. Assume that the following hold:
\begin{enumerate}[(a)]
\item There exists $c_0>0$ such that $f_0(\lambda)\ge c_0$ for all $\lambda\in (-\pi,\pi]$;
\item $\sum_{n=-\infty}^\infty|\gamma_0(n)|<\infty$;
\item $\gamma_0(n)=o(n^{-1})$.
\end{enumerate}
Then the condition (\ref{eq:block cond gen}) in Assumption A3 holds if $b_n=o(n)$. The result extends to the case where the underlying Gaussian $\{\mathbf{Z}_i\}$ is $J$-dimensional with independent components.
\end{proposition}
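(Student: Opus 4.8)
The plan is to verify the bound $\sum_{k=0}^{n}\rho_{k,l+b_n}=o(n)$ in Assumption A3 directly. Put $m=l+b_n$; since $l$ is fixed and $b_n=o(n)$ we have $m=o(n)$, so it is enough to show $\sum_{k=0}^{n}\rho_{k,m}=o(n)$ for any integer sequence $m=m(n)=o(n)$.

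First I would pass from the nonlinear to the linear (spectral) picture. Let $\gamma(h):=\int_{-\pi}^{\pi}f(\lambda)e^{ih\lambda}\,d\lambda$ denote the covariance of $\{Z_i\}$. By (\ref{eq:rho}) (and Lemma \ref{Lem:canonical}), for $k\ge m$,
\[
\rho_{k,m}=\big\|\Sigma_m^{-1/2}\,\Sigma_{k,m}\,\Sigma_m^{-1/2}\big\|,
\]
where $\Sigma_m$ is the $m\times m$ Toeplitz matrix generated by the spectral density $f=f_Hf_0$ and $\Sigma_{k,m}$ has entries $\gamma(k+i_2-i_1)$, $1\le i_1,i_2\le m$. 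Here hypothesis (a) is essential: since $f_H(\lambda)=|1-e^{i\lambda}|^{1-2H}\ge 2^{1-2H}$ and $f_0\ge c_0$, the essential infimum of $f$ is $\ge 2^{1-2H}c_0>0$, so $\lambda_{\min}(\Sigma_m)$ is bounded below uniformly in $m$ and $\|\Sigma_m^{-1}\|=O(1)$. For $0\le k<2m$ the crude bound $\rho_{k,m}\le 1$ contributes at most $2m=o(n)$, so only $\sum_{k=2m}^{n}\rho_{k,m}$ remains.

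The core of the argument is the estimate
\[
\rho_{k,m}\le C\,(m/k)^{2-2H},\qquad k\ge 2m .
\]
Two ingredients enter. (i) If the finite-section matrices were replaced by the corresponding bi-infinite Laurent (multiplication) operators, the product $\Sigma_m^{-1/2}\Sigma_{k,m}\Sigma_m^{-1/2}$ would become multiplication by $e^{ik\lambda}$, whose compression to coordinates $1,\dots,m$ is zero when $k\ge m$; the true size of $\rho_{k,m}$ is therefore driven by finite-section (Hankel-type) corrections, i.e.\ by how far the covariances $\gamma(k+h)$, $|h|<m$, deviate from $\gamma(k)$. Hypotheses (b)--(c) guarantee that $\gamma=\gamma_H*\gamma_0$ ($\gamma_H$ being the covariance with spectral density $f_H$) is regularly varying of index $2H-2$, so that $\Sigma_{k,m}$ equals $\gamma(k)\,\mathbf{1}\mathbf{1}^{\top}$ (with $\mathbf{1}=(1,\dots,1)^{\top}$) up to a controlled lower-order perturbation. (ii) Because $f(\lambda)\asymp|\lambda|^{1-2H}$ near the origin (again by (a)), $\Sigma_m^{-1}$ damps slowly-varying vectors far more strongly than a short-range spectrum would: by a Nikolskii/Markov--Bernstein inequality for the algebraically singular weight $|\lambda|^{1-2H}$ one has $|p(1)|^{2}\le C\,m^{2-2H}\int_{-\pi}^{\pi}|p(e^{i\lambda})|^{2}|\lambda|^{1-2H}\,d\lambda$ for trigonometric polynomials $p$ of degree $<m$, hence $\mathbf{1}^{\top}\Sigma_m^{-1}\mathbf{1}=O(m^{2-2H})$ (and analogous bounds for the polynomially weighted vectors produced by the perturbation in (i)). Combining (i), (ii) and $\gamma(k)\asymp k^{2H-2}$ gives the displayed estimate, and then
\[
\sum_{k=2m}^{n}\rho_{k,m}\le C\,m^{2-2H}\sum_{k=2m}^{n}k^{2H-2}\le C\,m^{2-2H}n^{2H-1}=C\Big(\tfrac{m}{n}\Big)^{2-2H}n,
\]
which is $o(n)$ since $m=o(n)$ and $2-2H>0$. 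Together with the $k<2m$ contribution this yields Assumption A3. For the $J$-dimensional case with independent components, $\Sigma_m$ and $\Sigma_{k,m}$ are block-diagonal across coordinates, so $\rho_{k,m}$ is the maximum of the $J$ one-dimensional block correlations and the same bound holds.

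The main obstacle is obtaining the sharp exponent $2-2H$ in ingredient (ii). Crude submultiplicative bounds --- $\rho_{k,m}\le\|\Sigma_m^{-1}\|\,\|\Sigma_{k,m}\|$, or estimating $\|\Sigma_{k,m}\|$ by the $\ell^{1}$ norm of its entries --- produce a factor $m$ instead of $m^{2-2H}$, which gives only $\sum_k\rho_{k,m}=O(m\,n^{2H-1})=o(n)$ under the restriction $b_n=o(n^{2-2H})$ (roughly the regime in which earlier subsampling results for long-range dependence were obtained). Reaching the full range $b_n=o(n)$ requires the quantitative orthogonal-polynomial/Toeplitz estimates of step (ii) together with careful bookkeeping of the rank-one and lower-order corrections to $\Sigma_{k,m}$; this is the technical content established in \citet{bai:taqqu:2015:canonical}.
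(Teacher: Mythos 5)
The paper offers no proof of this proposition: it is imported wholesale from \citet{bai:taqqu:2015:canonical} (Theorems 2.2 and 2.3), so there is no internal argument to compare yours against. Your outer skeleton is correct and is the natural one: the identity $\rho_{k,m}=\|\Sigma_m^{-1/2}\Sigma_{k,m}\Sigma_m^{-1/2}\|$ follows from (\ref{eq:rho alternative}); hypothesis (a) together with $f_H\ge 2^{1-2H}$ and (\ref{eq:lambda lower bound}) does give $\lambda_{\min}(\Sigma_m)$ bounded below; the crude bound $\rho_{k,m}\le 1$ disposes of $k<2m$; and if one grants $\rho_{k,m}\le C(m/k)^{2-2H}$ for $k\ge 2m$, the sum $Cm^{2-2H}\sum_{k\ge 2m}k^{2H-2}\le C(m/n)^{2-2H}n=o(n)$ closes the argument. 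Your remarks that the naive bound of Lemma \ref{Lem:naive bound} only yields $b_n=o(n^{2-2H})$, and that the independent-component vector case reduces to the scalar case by block-diagonality, both match the paper's own discussion (see (\ref{eq:B_N long memory}) and Remark \ref{Rem:indep}).

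The gap is that the central estimate $\rho_{k,m}\le C(m/k)^{2-2H}$ is asserted rather than proved. Your ingredient (ii) alone is genuinely available --- $\mathbf{1}^{T}\Sigma_m^{-1}\mathbf{1}=O(m^{2-2H})$ is Adenstedt's bound, and applied to the rank-one piece $\gamma(k)\mathbf{1}\mathbf{1}^{T}$ of $\Sigma_{k,m}$ it does produce the factor $(m/k)^{2-2H}$ --- but that same tool is what \citet{betken:Wendler:2015:subsampling} use, and it only gets them to $b_n=o(n^{3/2-H-\epsilon})$ as in (\ref{eq:BW b_n}). So the entire difference between the weaker block-size conditions and the full $b_n=o(n)$ claim lives in your ingredient (i): controlling, uniformly in $k$ and $m$, the non-rank-one remainder $\Sigma_{k,m}-\gamma(k)\mathbf{1}\mathbf{1}^{T}$ after conjugation by $\Sigma_m^{-1/2}$. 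You describe this step qualitatively ("controlled lower-order perturbation", "careful bookkeeping") and then explicitly defer it to \citet{bai:taqqu:2015:canonical}. Since the proposition is stated as a citation, deferring is a legitimate reading of the statement; but as a self-contained proof your argument is incomplete at precisely the step that makes the result nontrivial, and the heuristic in (i) (that the Laurent-operator compression vanishes for $k\ge m$) would need to be converted into a quantitative Hankel/finite-section estimate before the claimed exponent could be considered established.
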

In Proposition \ref{Pro:new bound}, $f_H(\lambda)$ is the spectral density of a FARIMA($0,d,0$) sequence with $d=H-1/2$, and $f_0(\lambda)$ is the spectral density of a sequence with short-range dependence.

Under the assumptions in Proposition \ref{Pro:new bound}, the spectral density $f(\lambda)$  cannot have a slowly varying factor which diverges to infinity or converges to zero at $\lambda=0$, because $f_0(\lambda)$  is bounded away from infinity and zero. For $H\in (1/2,1)$, the $\mathrm{FARIMA}(p,d,q)$ model with $d=H-1/2$ and the fractional Gaussian noise model satisfy the assumptions of Proposition \ref{Pro:new bound}.  See Examples 2.1 and 2.2 of \citet{bai:taqqu:2015:canonical}.

We thus have the following result which we formulate for simplicity in the univariate case $J=1$.
\begin{corollary}
Assume that Assumptions A1-A2  hold with $J=1$, and the underlying Gaussian $\{Z_i\}$ satisfies the assumptions in Proposition \ref{Pro:new bound}. If $b_n\rightarrow\infty$ and $b_n=o(n)$, then the conclusions of Theorem \ref{Thm:main gaussian} and Corollary \ref{Cor:gaussian} hold.

\end{corollary}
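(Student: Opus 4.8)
The plan is a direct reduction. The conclusions of Theorem~\ref{Thm:main gaussian} and Corollary~\ref{Cor:gaussian} are obtained under Assumptions~A1--A3, and here A1 and A2 are assumed outright; so the only task is to verify that A3 holds under the stated hypotheses, after which the two cited results apply verbatim.

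First I would unpack what A3 requires: that $b_n\to\infty$, that $b_n=o(n)$, and that $\sum_{k=0}^{n}\rho_{k,l+b_n}=o(n)$. The first two are hypotheses of the corollary. For the third, note that under A1 the process is $X_i=G(\mathbf Z_{i-l}^{\,i})$ with $l$ a fixed nonnegative integer, so the block length entering A3 is $m_n:=l+b_n$; since $l$ is constant, $m_n\to\infty$ and $m_n=o(n)$ whenever $b_n$ has those properties. Proposition~\ref{Pro:new bound} (with $J=1$, as assumed) states precisely that, when the spectral density of $\{Z_i\}$ factors as $f=f_Hf_0$ with $f_0$ satisfying (a)--(c), one has $\sum_{k=0}^n\rho_{k,c_n}=o(n)$ for any block-size sequence $c_n=o(n)$. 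Applying this with $c_n=m_n=l+b_n$ gives $\sum_{k=0}^n\rho_{k,l+b_n}=o(n)$, which is exactly \eqref{eq:block cond gen}. Hence A3 holds.

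With A1, A2, A3 all in force, Theorem~\ref{Thm:main gaussian} yields $\widehat F_{n,b_n}(x)\ConvP F(x)$ at every $x\in C(F)$, strengthened to $\sup_x|\widehat F_{n,b_n}(x)-F(x)|\to 0$ in probability when $F$ is continuous; and Corollary~\ref{Cor:gaussian} yields the corresponding consistency $|\widehat F_{n,b_n}(x)-P(T_n^*\le x)|\to 0$ in probability (uniformly in $x$ if $F$ is continuous). That is precisely the assertion to be proved.

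I do not anticipate a genuine obstacle here: the statement is essentially a packaging result that marries Proposition~\ref{Pro:new bound} with the general consistency theorem. The one bookkeeping point worth stating explicitly is that A3 controls $\rho_{k,l+b_n}$ rather than $\rho_{k,b_n}$, but since the memory depth $l$ of $G$ is fixed this is harmless and Proposition~\ref{Pro:new bound} applies to the shifted sequence $l+b_n$ with no modification. (It is implicit in the hypotheses that the postulated factorization of $f$ is compatible with A2 holding for some $H$-sssi limit; this is taken as given and need not be re-derived.)
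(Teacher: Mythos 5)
Your proposal is correct and follows exactly the route the paper intends (the paper gives no separate proof, treating the corollary as an immediate consequence of Proposition \ref{Pro:new bound} supplying Assumption A3, after which Theorem \ref{Thm:main gaussian} and Corollary \ref{Cor:gaussian} apply). Your explicit handling of the shift from $b_n$ to $l+b_n$ is a sensible bookkeeping remark that the paper leaves implicit.
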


\subsection{Further analysis of Assumption A3}\label{sec:ass A3}
In this section, we discuss the critical  Assumption A3, which involves the covariance structure of the underlying Gaussian $\{\mathbf{Z}_i\}$.  In particular, we shall give the general  bound (\ref{eq:bound lambda_k,m}) below for the canonical correlation $\rho_{k,m}$ in (\ref{eq:spec radius}), and discuss how it relates to Assumption A3. As noted in Proposition \ref{Pro:new bound}, however, this bound, in the long memory case, can be improved substantially so as  to provide more flexibility   on the choice of the block size $b_n$.

To state this general bound, define
\begin{equation}\label{e:Mmax}
M_\gamma(k)=\max_{n> k} \max_{1\le j_1,j_2\le J} |\gamma_{j_1,j_2}(n)|,
\end{equation}
and
\begin{equation}\label{eq:lambda min}
\text{$\lambda_m=$ the {\it minimum} eigenvalue of  $\Sigma_m$}.
\end{equation}
Note that $\lambda_m>0$ since $\Sigma_m$ is assumed to be positive definite.
\begin{lemma}\label{Lem:naive bound}
Let $\rho_{k,m}$ be as in (\ref{eq:spec radius}), $M_\gamma(k)$ be as in (\ref{e:Mmax}) and $\lambda_m$ be as  in (\ref{eq:lambda min}). We have the bound
\begin{equation}\label{eq:bound lambda_k,m}
\rho_{k,m}\le \min \left\{Jm \frac{M(k-m)}{\lambda_m}, 1\right\}.
\end{equation}
\end{lemma}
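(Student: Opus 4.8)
The plan is to bound the maximal correlation $\rho_{k,m}=\rho(\mathbf{Z}_1^m,\mathbf{Z}_{k+1}^{k+m})$ by controlling, via the variational characterization (\ref{eq:rho}), the correlation $\Corr(\langle\mathbf{x},\mathbf{Z}_1^m\rangle,\langle\mathbf{y},\mathbf{Z}_{k+1}^{k+m}\rangle)$ over all unit vectors. Write $\mathbf{U}=\mathbf{Z}_1^m$ and $\mathbf{V}=\mathbf{Z}_{k+1}^{k+m}$, both viewed as $Jm$-dimensional Gaussian vectors. Then $\Cov(\langle\mathbf{x},\mathbf{U}\rangle,\langle\mathbf{y},\mathbf{V}\rangle)=\mathbf{x}^\top\Sigma_{k,m}\mathbf{y}$ where $\Sigma_{k,m}$ is the cross-covariance in (\ref{eq:Sigma_k,m}), while $\Var(\langle\mathbf{x},\mathbf{U}\rangle)=\mathbf{x}^\top\Sigma_m\mathbf{x}\ge\lambda_m\|\mathbf{x}\|^2$ and similarly $\Var(\langle\mathbf{y},\mathbf{V}\rangle)\ge\lambda_m\|\mathbf{y}\|^2$, using stationarity so that both blocks have the same covariance matrix $\Sigma_m$, and the definition (\ref{eq:lambda min}). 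Hence
\[
\left|\Corr\big(\langle\mathbf{x},\mathbf{U}\rangle,\langle\mathbf{y},\mathbf{V}\rangle\big)\right|
\le \frac{|\mathbf{x}^\top\Sigma_{k,m}\mathbf{y}|}{\lambda_m\|\mathbf{x}\|\,\|\mathbf{y}\|}
\le \frac{\|\Sigma_{k,m}\|_{\mathrm{op}}}{\lambda_m},
\]
so that $\rho_{k,m}\le\|\Sigma_{k,m}\|_{\mathrm{op}}/\lambda_m$.

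Next I would estimate the operator norm of $\Sigma_{k,m}$ by a cruder but explicit quantity. Every entry of $\Sigma_{k,m}$ is of the form $\gamma_{j_1,j_2}(i_2+k-i_1)$ with $1\le i_1,i_2\le m$, so the lag $i_2+k-i_1$ ranges over $\{k-m+1,\ldots,k+m-1\}$, all of which exceed $k-m$; therefore each entry is bounded in absolute value by $M_\gamma(k-m)$ as defined in (\ref{e:Mmax}). For a $p\times q$ matrix with entries bounded by $B$ one has $\|\cdot\|_{\mathrm{op}}\le\sqrt{pq}\,B$ (or even just $\max(p,q)B$); here $p=q=Jm$, giving $\|\Sigma_{k,m}\|_{\mathrm{op}}\le Jm\,M_\gamma(k-m)$. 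Combining, $\rho_{k,m}\le Jm\,M_\gamma(k-m)/\lambda_m$, and since $\rho_{k,m}\le 1$ always (it is a correlation), we may take the minimum of the two, which is exactly (\ref{eq:bound lambda_k,m}).

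The only mild subtlety — and the step I would be most careful about — is the passage from the constrained supremum over unit vectors to the operator norm: one must note that whenever $\mathbf{x}$ or $\mathbf{y}$ lies in the kernel of the relevant block the correlation is undefined and simply omitted, and that since $\Sigma_m$ is assumed positive definite this degeneracy does not arise, so the division by $\mathbf{x}^\top\Sigma_m\mathbf{x}\ge\lambda_m\|\mathbf{x}\|^2>0$ is legitimate. Everything else is routine linear algebra; there is no analytic difficulty. (One should also double-check that $M(k-m)$ in the statement is the same as $M_\gamma(k-m)$ in (\ref{e:Mmax}) — this appears to be a notational shorthand in the paper.)
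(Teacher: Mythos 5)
Your proof is correct and follows essentially the same route as the paper's: the variational formula (\ref{eq:rho}) rewritten as $\sup_{\mathbf{x},\mathbf{y}}\mathbf{x}^T\Sigma_{k,m}\mathbf{y}/(\sqrt{\mathbf{x}^T\Sigma_m\mathbf{x}}\sqrt{\mathbf{y}^T\Sigma_m\mathbf{y}})$, the lower bound $\mathbf{x}^T\Sigma_m\mathbf{x}\ge\lambda_m\|\mathbf{x}\|^2$, the entrywise bound $|\gamma_{j_1,j_2}(i_2+k-i_1)|\le M_\gamma(k-m)$ combined with the operator-norm (maximum singular value) estimate $\le Jm\,M_\gamma(k-m)$, and finally the trivial bound $\rho_{k,m}\le 1$. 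Your closing parenthetical is also right: $M(k-m)$ in the displayed bound is just shorthand for $M_\gamma(k-m)$.
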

\begin{proof}
Let $\mathbf{x}$ and $\mathbf{y}$ be (column) vectors in $\mathbb{R}^{Jm}$.
Note that each $\mathbf{Z}_1^m=(\mathbf{Z}_1,\cdots,\mathbf{Z}_m)$ and $\mathbf{Z}_{k+1}^{k+m}=(\mathbf{Z}_{k+1},\cdots,\mathbf{Z}_{k+m})$ are $Jm$-dimensional Gaussian vectors translated by $k$ units in the time index. Therefore by (\ref{eq:rho}),
\begin{equation}\label{eq:rho alternative}
\rho_{k,m}=\rho\left(\mathbf{Z}_1^m, \mathbf{Z}_{k+1}^{k+m}\right)=\sup_{\mathbf{x}, \mathbf{y}\in \mathbb{R}^{Jm}}
\frac{\E\left[ \langle\mathbf{x}, \mathbf{Z}_1^m \rangle \langle \mathbf{y}, \mathbf{Z}_{k+1}^{k+m}\rangle \right]}{\Big(\Var[ \langle\mathbf{x}, \mathbf{Z}_1^m\rangle]\Big)^{1/2} \Big(\Var[\langle\mathbf{y}, \mathbf{Z}_{k+1}^{k+m}\rangle]\Big)^{1/2} }= \sup_{\mathbf{x}, \mathbf{y}\in \mathbb{R}^{Jm}} \frac{\mathbf{x}^T \Sigma_{k,m} \mathbf{y}}{\sqrt{\mathbf{x}^T \Sigma_m \mathbf{x}} \sqrt{\mathbf{y}^T \Sigma_m \mathbf{y}}},
\end{equation}
where $\Sigma_m$ is as in (\ref{eq:Sigma_m}),
$\Sigma_{k,m}$ is as in (\ref{eq:Sigma_k,m}).
By relations 6.58(a) and 6.62(a) in \citet{seber:2008:matrix}, one has
\begin{equation}\label{eq:opt lambda_k,m}
\rho_{k,m}= \sup_{\mathbf{x}, \mathbf{y}\in \mathbb{R}^{Jm}}\frac{\left|\mathbf{x}^T \Sigma_{k,m} \mathbf{y}\right|}{\sqrt{\mathbf{x}^T \Sigma_m \mathbf{x}} \sqrt{\mathbf{y}^T \Sigma_m \mathbf{y}}} \le  \sup_{\mathbf{x}, \mathbf{y}\in \mathbb{R}^{Jm}}\frac{1}{\lambda_m}\frac{\left|\mathbf{x}^T \Sigma_{k,m} \mathbf{y}\right|}{\|\mathbf{x}\|\|\mathbf{y}\|}\le \frac{1}{\lambda_m} \sigma_{k,m},
\end{equation}
where $\lambda_m$ is the smallest eigenvalue of $\Sigma_m$, and  $\sigma_{k,m}$ is the maximum  singular value\footnote{Note that $\Sigma_{k,m}$ is not a symmetric matrix. The square of its singular values are the eigenvalues of $\Sigma_{k,m}^T \Sigma_{k,m}$, which is symmetric and non-negative definite. 
} of $\Sigma_{k,m}$.
By \citet{seber:2008:matrix} 4.66(b) and 4.67(b),  $\sigma_{k,m}$ is bounded by the linear size of the matrix $\Sigma_{k,m}$ times the maximum absolute value of all the elements of the matrix.
Since the matrix $\Sigma_{k,m}$ has linear size $Jm$, we have
\[
\sigma_{k,m}\le Jm \max_{1\le i_1,i_2\le m}\max_{1\le j_1,j_2\le J}|\gamma_{j_1,j_2}(i_2+k-i_1)| \le Jm  \max_{n> k-m} \max_{1\le j_1,j_2\le J} |\gamma_{j_1,j_2}(n)|
=JmM_\gamma(k-m).
\]
The bound (\ref{eq:bound lambda_k,m}) is then obtained by noting that $\rho_{k,m}\le 1$ in view of (\ref{eq:maximal corr eq}).
\end{proof}

\begin{example}
Consider the important scalar case $J=1$,
where $\mathbf{Z}_i=Z_i$. Denote the covariance function of $\{Z_i\}$ by $\gamma(n)$ and  its spectral density by  $f(\omega)$.
 In this case, it is known that
$\Sigma_m$ is non-singular for any $m$ if $\lim_{n\rightarrow\infty}\gamma(n)=0$ (see Proposition 5.1.1 of \citet{brockwell:1991:time}), and that the minimum eigenvalue $\lambda_m$ satisfies
\begin{equation}\label{eq:lambda lower bound}
\lambda_m\ge 2\pi~ \mathrm{ess\,inf}_\omega f(\omega),\quad \text{and}\quad \lim_{m\rightarrow\infty} \lambda_m =2\pi~\mathrm{ess\,inf}_\omega f(\omega),
\end{equation}
where ``$\mathrm{ess\,inf}$'' denotes the essential infimum with respect to Lebesgue measure on $[-\pi,\pi)$  (see \citet{grenander:szego:1958:toeplitz},  Chapter 5.2).
If $J=1$, $M_\gamma(k)$ also reduces to
\begin{equation}\label{eq:M_gamma one dim}
M_\gamma(k)=\max_{n> k}|\gamma(n)|.
\end{equation}
\end{example}

\begin{remark}\label{Rem:indep}
Consider the vector case but suppose that $\{Z_{i,1}\},\ldots,\ldots, \{Z_{i,J}\}$ are mutually independent, i.e.,
$
\gamma_{j_1,j_2}(n)=\gamma_{j_1,j_2}(n)\I\{j_1=j_2\}.
$
Let $\Gamma_{m,j}=\left(\gamma_{j,j}(i_1-i_2)\right)_{1\le i_1,i_2\le m}$.
In this case, we have a  block-diagonal $\Sigma_m=\mathrm{diag}(\Gamma_{m,1},\ldots,\Gamma_{m,J})$.
Let  $\Gamma_{k,m,j}=\left(\gamma_{j,j}(i_2+k-i_1)\right)_{1\le i_1,i_2\le m}$.
 We also have a block-diagonal $\Sigma_{k,m}=\mathrm{diag}(\Gamma_{k,m,1},\ldots,\Gamma_{k,m,J})$. Let $\rho_{k,m,j}$ be the between-block canonical correlation $\rho(\mathbf{Z}_{1,j}^m, \mathbf{Z}_{k,j}^m)$ in component $j$, $j=1,\ldots,J$. The block-diagonal structure implies that
\[
\rho_{k,m}=\max\{\rho_{k,m,j},~j=1,\ldots,J\}.
\]

\end{remark}
\begin{proposition}\label{Pro:easy bound conseq}
Assumption A3 holds if $b_n=o(n)$ and
\begin{equation}\label{eq:block cond}
 \sum_{k=0}^{n}\min \left\{ \frac{b_n}{\lambda_{b_n+l}} M_\gamma(k), 1\right \}=o(n).
\end{equation}
\end{proposition}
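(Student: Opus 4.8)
The plan is to verify the only non-trivial part of Assumption A3: the hypotheses already contain $b_n\to\infty$ and $b_n=o(n)$, so it remains to establish the summability bound (\ref{eq:block cond gen}), i.e.\ that $\sum_{k=0}^n \rho_{k,l+b_n}=o(n)$. The proposition is really just the reformulation of A3 obtained by feeding the general bound of Lemma \ref{Lem:naive bound} into (\ref{eq:block cond gen}). Writing $m=l+b_n$ for brevity, I would split $\sum_{k=0}^n\rho_{k,m}=\sum_{k=0}^{m-1}\rho_{k,m}+\sum_{k=m}^n\rho_{k,m}$. For the first sum the universal bound $\rho_{k,m}\le1$ coming from (\ref{eq:maximal corr eq}) gives $\sum_{k=0}^{m-1}\rho_{k,m}\le m=l+b_n=o(n)$, since $l$ is fixed and $b_n=o(n)$.

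For the second sum I would invoke Lemma \ref{Lem:naive bound} with this $m$: for $k\ge m$ one has $\rho_{k,m}\le\min\{Jm\,M_\gamma(k-m)/\lambda_m,\,1\}$. Reindexing by $j=k-m$ and then enlarging the range of summation (all terms are nonnegative) yields
\[
\sum_{k=m}^n\rho_{k,m}\le\sum_{j=0}^{n}\min\Big\{\frac{J(l+b_n)}{\lambda_{b_n+l}}\,M_\gamma(j),\,1\Big\}.
\]
The remaining step is to replace the prefactor $J(l+b_n)$ by $b_n$ so as to match hypothesis (\ref{eq:block cond}). For this I would use the elementary inequality $\min\{Ca,1\}\le C\min\{a,1\}$, valid for any $C\ge1$ and $a\ge0$ (immediate by considering $a\le1$ and $a>1$ separately), applied with $C=J(l+1)$, which dominates $J(l+b_n)/b_n$ whenever $b_n\ge1$. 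This gives $\sum_{k=m}^n\rho_{k,m}\le J(l+1)\sum_{j=0}^n\min\{b_n\,M_\gamma(j)/\lambda_{b_n+l},\,1\}=o(n)$ by (\ref{eq:block cond}). Adding the two pieces proves $\sum_{k=0}^n\rho_{k,l+b_n}=o(n)$, i.e.\ (\ref{eq:block cond gen}), so Assumption A3 holds.

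I expect no genuine obstacle here; the argument is bookkeeping layered on Lemma \ref{Lem:naive bound}. The one point meriting a moment's care is the index shift between $M_\gamma(k-m)$ appearing in Lemma \ref{Lem:naive bound} and $M_\gamma(k)$ appearing in the stated condition: since $M_\gamma$ is non-increasing one cannot simply swap them inside the bound, but this is harmless, because after the substitution $j=k-m$ one is summing nonnegative quantities indexed by $j\in\{0,\dots,n-m\}$ and may freely pass to the larger index set $\{0,1,\dots,n\}$ without affecting the $o(n)$ conclusion.
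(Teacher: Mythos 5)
Your proof is correct and follows essentially the same route as the paper's: split the sum at $k=b_n+l$, bound the initial block by $\rho_{k,m}\le 1$, and apply Lemma \ref{Lem:naive bound} with $m=l+b_n$ to the tail. You simply fill in the bookkeeping (the reindexing $j=k-m$ and the absorption of the constant factor $J(l+b_n)/b_n$ via $\min\{Ca,1\}\le C\min\{a,1\}$) that the paper's one-line argument leaves implicit.
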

\begin{proof}
In view of Lemma \ref{Lem:naive bound}, we have
\[
\sum_{k=0}^n \rho_{k,b_n+l}\le (b_n+l) + \sum_{k=b_n+l}^n \mathrm{min}\left\{J b_n \frac{M(k-b_n-l)}{\lambda_{b_n+l}}, ~1 \right\}  =o(n)
\]
since $b_n=o(n)$. Hence Assumption A3 holds.
\end{proof}

\noindent\textbf{Implications of Proposition \ref{Pro:easy bound conseq}}.

\medskip

We  discuss here the implications of  Condition (\ref{eq:block cond}) in various specific situations.
This discussion is restricted to the case $J=1$ which is of most interest. This discussion can be easily extended to the case of independent components  via the observation made in Remark \ref{Rem:indep}.
Let $c,C>0$ be  generic constants whose value can change from expression to expression. The notation $a\asymp b$ means $cb\le  a\le Cb $ for some $0<c<C$. Assume throughout that the covariance $\gamma(n)\rightarrow 0$ and $b_n=o(n)$ as $n\rightarrow\infty$. We distinguish two cases:  $\mathrm{ess\,inf}_\omega f(\omega)>0$ and  $\mathrm{ess\,inf}_\omega f(\omega)=0$.

\medskip

\noindent\fbox{\textbf{1. }Assume first $\mathrm{ess\,inf}_\omega f(\omega)>0$.}

\medskip
 In view of (\ref{eq:lambda lower bound}), the minimum eigenvalue $\lambda_m$ is bounded below away from zero, and hence Condition (\ref{eq:block cond}) holds if
\begin{equation}\label{eq:m_f>0}
b_n\sum_{k=0}^{n} M_\gamma(k)=o(n),
\end{equation}
where $M_\gamma(k)$ is expressed as (\ref{eq:M_gamma one dim}).
Consider the case $\sum_{k=0}^{\infty} M_\gamma(k)<\infty$, which implies the typical \emph{short-range dependence} condition: $\sum_{k=1}^\infty |\gamma(k)|< \sum_{k=0}^{\infty} M_\gamma(k) <\infty$. Then (\ref{eq:m_f>0}) reduces to  $b_n=o(n)$.  We get in particular: 
\begin{corollary}\label{cor:SRD sub}
Suppose that  $\mathrm{ess\,inf}_\omega f(\omega)>0$, and $|\gamma(n)|\le d_n$, where $d_n$ is non-increasing and summable (typically, $d_n=cn^{-\beta}$ for some constant $c>0$ and $\beta>1$). If $b_n=o(n)$, then Assumption A3 holds. 
\end{corollary}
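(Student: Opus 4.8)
The plan is to deduce this as an immediate specialization of Proposition \ref{Pro:easy bound conseq}. That proposition tells us that Assumption A3 holds as soon as $b_n = o(n)$ and the summability condition (\ref{eq:block cond}) is satisfied, so the entire task reduces to verifying (\ref{eq:block cond}) under the present hypotheses. Since we are assuming $\mathrm{ess\,inf}_\omega f(\omega) > 0$, the discussion preceding the corollary already reduces (\ref{eq:block cond}) to the simpler requirement (\ref{eq:m_f>0}), namely $b_n \sum_{k=0}^{n} M_\gamma(k) = o(n)$; this follows because, by (\ref{eq:lambda lower bound}), the minimum eigenvalue $\lambda_{b_n+l}$ is bounded below by a positive constant uniformly in $n$, so the $\min\{\cdot,1\}$ in (\ref{eq:block cond}) can be dominated (up to a constant) by $b_n M_\gamma(k)$.

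Next I would bound $M_\gamma(k)$ using the hypothesis $|\gamma(n)| \le d_n$ with $d_n$ non-increasing: since $d_n$ is non-increasing, $M_\gamma(k) = \max_{n>k} |\gamma(n)| \le \max_{n>k} d_n = d_{k+1} \le d_k$. Therefore $\sum_{k=0}^{n} M_\gamma(k) \le \sum_{k=0}^{n} d_k \le \sum_{k=0}^{\infty} d_k < \infty$, where finiteness is exactly the summability assumption on $d_n$. Consequently $b_n \sum_{k=0}^{n} M_\gamma(k) \le C b_n$ for a finite constant $C$ not depending on $n$, and since $b_n = o(n)$ by assumption, we get $b_n \sum_{k=0}^{n} M_\gamma(k) = o(n)$, which is (\ref{eq:m_f>0}).

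This verifies (\ref{eq:block cond}), so Proposition \ref{Pro:easy bound conseq} applies and Assumption A3 holds. There is no real obstacle here — the statement is essentially a packaging of the already-established chain Lemma \ref{Lem:naive bound} $\Rightarrow$ Proposition \ref{Pro:easy bound conseq} $\Rightarrow$ condition (\ref{eq:m_f>0}) — and the only point requiring a moment's care is the monotonicity argument showing $M_\gamma(k) \le d_k$, which is what lets the summability of $d_n$ be transferred to summability of $M_\gamma$. The parenthetical example $d_n = c n^{-\beta}$ with $\beta > 1$ is then just the observation that such a sequence is non-increasing and summable.
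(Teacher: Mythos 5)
Your proof is correct and follows essentially the same route as the paper: reduce via $\mathrm{ess\,inf}_\omega f(\omega)>0$ and (\ref{eq:lambda lower bound}) to condition (\ref{eq:m_f>0}), then use monotonicity of $d_n$ to get $M_\gamma(k)\le d_k$ and hence $\sum_k M_\gamma(k)<\infty$, so that $b_n=o(n)$ suffices. The paper's own proof is just a one-line compression of exactly this argument.
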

\begin{proof}
$|\gamma(k)|\le d_k$ implies   $M_\gamma(k)\le d_k$, and hence $\sum_{k=0}^{\infty} M_\gamma(k) <\infty$.
\end{proof}

Consider now the situation relevant to \emph{long-range dependence}:
\begin{equation}\label{eq:long memory cov}
\gamma(k)= k^{2H-2}L(k),\qquad 1/2<H<1,
\end{equation}
where $L(k)$ is a slowly varying function at infinity.
 By
Theorem 1.5.3 of \citet{bingham:goldie:teugels:1989:regular}, Condition (\ref{eq:long memory cov}) implies that $M_\gamma(k)\sim    k^{2H-2}L(k)$, which  entails that $\sum_{k=0}^{n} M_\gamma(k)\le c  n^{2H-1}L(n)$. Thus (\ref{eq:m_f>0}) holds if
\begin{equation}\label{eq:B_N long memory}
b_n=o(n^{2-2H}L(n)^{-1}).
\end{equation}
So, the larger $H$, the smaller the block size $b_n$.
\begin{corollary}\label{cor:LRD sub}
Suppose that  $\mathrm{ess\,inf}_\omega f(\omega)>0$, and $|\gamma(n)|\le n^{2H-2}L(n)$, where $1/2<H<1$ and $L$ is slowly varying. If $b_n=o(n^{2-2H}L(n)^{-1})$, then Assumption A3 holds. 
\end{corollary}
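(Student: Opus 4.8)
\emph{Proof proposal.} The plan is to deduce this from Proposition~\ref{Pro:easy bound conseq}: it suffices to verify Condition~(\ref{eq:block cond}), since the hypothesis $b_n=o(n^{2-2H}L(n)^{-1})$ already forces $b_n=o(n)$ (note $2-2H<1$, so $n^{2-2H}L(n)^{-1}=o(n)$). So the whole task reduces to bounding the sum in (\ref{eq:block cond}).

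First I would use the assumption $\mathrm{ess\,inf}_\omega f(\omega)>0$ to dispose of the eigenvalue factor. By (\ref{eq:lambda lower bound}), $\lambda_m\ge 2\pi\,\mathrm{ess\,inf}_\omega f(\omega)=:c_0>0$ for every $m$, hence $\lambda_{b_n+l}\ge c_0$ and $b_n/\lambda_{b_n+l}\le c_0^{-1}b_n$. Therefore Condition~(\ref{eq:block cond}) follows once we show $b_n\sum_{k=0}^{n}M_\gamma(k)=o(n)$, i.e. (\ref{eq:m_f>0}), with $M_\gamma(k)=\max_{n>k}|\gamma(n)|$ as in (\ref{eq:M_gamma one dim}).

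Next I would estimate $M_\gamma(k)$ through regular variation. The hypothesis $|\gamma(n)|\le n^{2H-2}L(n)$ gives $M_\gamma(k)\le \max_{n>k} n^{2H-2}L(n)$; since $2H-2\in(-1,0)$, the function $x^{2H-2}L(x)$ is regularly varying of negative index and hence asymptotically decreasing, so by Theorem~1.5.3 of \citet{bingham:goldie:teugels:1989:regular} one has $\max_{n>k}n^{2H-2}L(n)\sim k^{2H-2}L(k)$, whence $M_\gamma(k)\le C k^{2H-2}L(k)$ for all large $k$. Summing and applying Karamata's theorem (again \citet{bingham:goldie:teugels:1989:regular}), and using $2H-1>0$, yields $\sum_{k=0}^{n}M_\gamma(k)\le C n^{2H-1}L(n)$. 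Consequently $n^{-1}b_n\sum_{k=0}^{n}M_\gamma(k)\le C\,b_n\,n^{2H-2}L(n)$, which tends to $0$ precisely because $b_n=o(n^{2-2H}L(n)^{-1})$. This verifies (\ref{eq:m_f>0}), hence (\ref{eq:block cond}), and Proposition~\ref{Pro:easy bound conseq} then gives Assumption~A3.

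The step I expect to need the most care is the pair of regular-variation estimates: replacing the tail maximum $M_\gamma(k)$ by the pointwise bound $k^{2H-2}L(k)$, which relies on $2H-2<0$, and bounding the partial sum $\sum_{k\le n}k^{2H-2}L(k)$ by a constant times $n^{2H-1}L(n)$, which relies on $2H-1>0$. Both are standard consequences of the uniform convergence and Karamata theorems for slowly/regularly varying sequences, but one must carry the slowly varying factor $L$ through the inequalities correctly and keep track of the two sign conditions on the index. Apart from that, the argument is a direct substitution into Proposition~\ref{Pro:easy bound conseq}, exactly paralleling the reasoning given just above Corollary~\ref{cor:SRD sub}.
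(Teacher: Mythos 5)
Your argument is correct and is essentially the paper's own: the authors likewise use the lower bound (\ref{eq:lambda lower bound}) on $\lambda_m$ under $\mathrm{ess\,inf}_\omega f(\omega)>0$ to reduce Condition (\ref{eq:block cond}) to (\ref{eq:m_f>0}), then invoke Theorem 1.5.3 of \citet{bingham:goldie:teugels:1989:regular} to get $M_\gamma(k)\sim k^{2H-2}L(k)$ and Karamata's theorem to bound $\sum_{k\le n}M_\gamma(k)$ by $c\,n^{2H-1}L(n)$, so that $b_n=o(n^{2-2H}L(n)^{-1})$ suffices. Your extra care in passing from the upper bound $|\gamma(n)|\le n^{2H-2}L(n)$ to the tail maximum is a minor but welcome refinement of the same reasoning.
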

The case $|\gamma(k)|\le k^{2H-2}L(k)$ 
also encompasses the seasonal long memory situations (see, e.g., \citet{haye:viano:2003:limit}),  where $\gamma(k)$ oscillates within a power-law envelope.

In the long-range dependent case, \cite{betken:Wendler:2015:subsampling}  obtained recently a bound for $\rho_{k,m}$ in (\ref{eq:spec radius}) using a result of \citet{adenstedt:1974:large}  under some additional assumptions. Their bound allows (\ref{eq:block cond gen}) to hold under the block size condition
\begin{equation}\label{eq:BW b_n}
b_n=o(n^{3/2-H-\epsilon})
\end{equation}
with arbitrarily small $\epsilon>0$. The condition (\ref{eq:BW b_n}) is better than  (\ref{eq:B_N long memory}) for each $H$,  and  $b_n =O(n^{1/2})$ is always allowed.

We have also seen  that if the model satisfies the assumptions of Proposition \ref{Pro:new bound}, one can choose
\[
b_n=o(n),
\]
irrespective of the value of $H\in (1/2,1)$.

\medskip

\noindent\fbox{\textbf{2. }Assume now $\mathrm{ess\,inf}_\omega f(\omega)=0$.}

\medskip
As mentioned in (\ref{eq:lambda lower bound}),
the smallest covariance eigenvalue $\lambda_m$  converges to $\mathrm{ess\,inf}_\omega f(\omega)=0$ as $m \rightarrow \infty$.
The rate of convergence
has been investigated by a number of authors. See,  e.g., \citet{kac:1953:eigen}, \citet{pourahmadi:1988:remarks}, \citet{serra:1998:extreme}, \citet{tilli:2003:universal} and \citet{simonenko:2005:asymptotic}. It involves the order of the zeros of $f(\omega)$. We say $f(\omega)$ has a \emph{zero of order} $\nu>0$ at $\omega=\omega_0$ if $f(\omega)\asymp|\omega-\omega_0|^{\nu}$.
Roughly speaking, the rate  at which $\lambda_m$ converges to zero follows the highest order of the zeros of $f(\omega)$, and the  rate of convergence to zero cannot be faster than exponential:
\begin{equation}\label{eq:universal lambda bound}
\lambda_m\ge e^{-cm}
\end{equation}
for some $c>0$ (see \citet{pourahmadi:1988:remarks} and \citet{tilli:2003:universal}).
Let us  focus on the situation where $f(\omega)$ has a finite number of zeros of polynomial orders. Specifically,
 suppose that $f(\omega)$ has zeros of order $\nu_1,\ldots,\nu_p$ at $p$ distinct points $\omega_1,\ldots,\omega_p$, and $f(\omega)$ stays positive outside arbitrary neighborhoods of $\omega_1,\ldots,\omega_p$. Then by Theorem 2.2 of \citet{simonenko:2005:asymptotic}, one has
$\lambda_m\asymp m^{-\nu}$ where
\[
\nu=\max(\nu_1,\ldots,\nu_p).\]
Therefore, 
\[\lambda_{b_n+l}\asymp (b_n+l)^{-\nu}\asymp b_n^{-\nu}
\] 
and since $M_\gamma(k)$ is non-increasing, we have
\begin{equation}\label{eq:bound zero case}
\sum_{k=0}^{n}\min \left\{ \frac{b_n}{\lambda_{b_n+l}} M_\gamma(k), 1\right \}\le \sum_{k=0}^{p_n} 1 + Cb_n^{1+\nu}\sum_{k=p_n+1}^n M_\gamma(k)\le  C\left(p_n+n b_n^{1+\nu} M_{\gamma}(p_n) \right).
\end{equation}
To satisfy (\ref{eq:block cond}), we need the last expression in (\ref{eq:bound zero case}) to be of order $o(n)$.
This will be so if   as $n\rightarrow\infty$, $p_n=o(n)$, and
\begin{equation}\label{eq:b_n remark restr}
b_n=o\left([M_{\gamma}\left(p_n\right)]^{-1/(1+\nu)}\right).
\end{equation}
To get the weakest restriction on $b_n$,  let in addition $p_n$ grow fast enough  so that $n/p_n=o(n^{\delta})$ for any $\delta>0$ (e.g., choose $n/p_n \asymp\log n$).
We have the following two typical cases:
\begin{itemize}
\item $M_\gamma(k)=O(e^{-k})$ decays exponentially. In this case, $[M_{\gamma}\left(p_n\right)]^{-1/(1+\nu)}=O(e^{p_n/(1+\nu)} )$, so the condition (\ref{eq:b_n remark restr}) is certainly satisfied when $b_n=o(n)$. Hence Assumption A3  holds with $b_n=o(n)$;
\item $M_\gamma(k)=O(k^{-\beta})$, $\beta>0$. In this case,  (\ref{eq:block cond}) holds when
\begin{equation}\label{eq:block cond nu beta}
b_n=o(n^{\beta/(1+\nu)-\epsilon})
\end{equation}
for arbitrarily small $\epsilon>0$. So the worst case is when $\beta$ is close to $0$ and $\nu$ is large.

A nice example involving both $\nu$ an $\beta$ is when $Z(n)$ is \emph{anti-persistent} (also called negative memory), e.g., the fractional Gaussian noise (the increments of fractional Brownian motion) with $H<1/2$, and FARIMA$(p,d,q)$ with $d=H-1/2$ so that $-1/2<d<0$. In this case, we have $\beta=2-2H$   and $\nu=1-2H$ in (\ref{eq:block cond nu beta}), and hence  (\ref{eq:block cond}) holds with $b_n=o(n^{1-\epsilon})$. Therefore: 

\end{itemize}

\begin{corollary}   
Suppose that $\{Z_n\}$ is fractional Gaussian noise with $H<1/2$ or  FARIMA$(p,d,q)$ with $-1/2<d<0$. If
$b_n=o(n^{1-\epsilon})
$ for  $\epsilon>0$ arbitrarily small, then  Assumption A3 holds.
\end{corollary}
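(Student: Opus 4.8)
The plan is to invoke Proposition~\ref{Pro:easy bound conseq}, which reduces Assumption~A3 to verifying Condition~(\ref{eq:block cond}), namely
\[
\sum_{k=0}^{n}\min\left\{\frac{b_n}{\lambda_{b_n+l}}M_\gamma(k),\,1\right\}=o(n),\qquad b_n=o(n).
\]
So the task is entirely about pinning down the two ingredients $\lambda_{b_n+l}$ (the minimum covariance eigenvalue) and $M_\gamma(k)$ (the tail bound on the autocovariance) for the fractional Gaussian noise and $\mathrm{FARIMA}(p,d,q)$ models in the anti-persistent regime $-1/2<d<0$, and then feeding the resulting rates into the bound~(\ref{eq:bound zero case}) already derived in the ``$\mathrm{ess\,inf}_\omega f(\omega)=0$'' discussion above.

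First I would recall that both models have a spectral density that vanishes at $\omega=0$ like $|\omega|^{2d}=|\omega|^{1-2H}$ (with $d=H-1/2\in(-1/2,0)$), and stays bounded away from zero away from the origin; hence $f$ has a single zero of order $\nu=-2d=1-2H$, so Theorem~2.2 of \citet{simonenko:2005:asymptotic} gives $\lambda_{b_n+l}\asymp b_n^{-\nu}=b_n^{1-2H}$, exactly as in the paragraph preceding the corollary. Next I would record the autocovariance decay: for fractional Gaussian noise with $H<1/2$ one has $\gamma(n)\asymp n^{2H-2}$ (with a negative sign for large $n$), and the same polynomial envelope $|\gamma(n)|=O(n^{2H-2})$ holds for $\mathrm{FARIMA}(p,d,q)$ with $d=H-1/2$; this is classical (e.g.\ from the asymptotics of the Fourier coefficients of $|1-e^{i\lambda}|^{-2d}$ times a smooth positive factor). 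Therefore $M_\gamma(k)=\max_{n>k}|\gamma(n)|=O(k^{2H-2})$, so in the notation of~(\ref{eq:block cond nu beta}) we have $\beta=2-2H$ and $\nu=1-2H$.

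Then I would simply substitute: $\beta/(1+\nu)=(2-2H)/(2-2H)=1$, so Condition~(\ref{eq:block cond}) holds whenever $b_n=o(n^{1-\epsilon})$ for arbitrarily small $\epsilon>0$ (choosing $p_n$ with $n/p_n\asymp\log n$ as in the derivation of~(\ref{eq:block cond nu beta})). By Proposition~\ref{Pro:easy bound conseq} this yields Assumption~A3, which is the assertion of the corollary. The main obstacle — really the only nontrivial point — is justifying the eigenvalue rate $\lambda_m\asymp m^{-\nu}$ and the covariance envelope $|\gamma(n)|=O(n^{2H-2})$ for the general $\mathrm{FARIMA}(p,d,q)$ case rather than just pure fractional noise: one must argue that multiplying $f_H(\lambda)=|1-e^{i\lambda}|^{-2d}$ by the smooth, strictly positive rational factor coming from the AR and MA polynomials changes neither the order of the zero at the origin nor the polynomial decay rate of the Fourier coefficients. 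Both facts are standard consequences of the regular-variation/Abelian-Tauberian machinery (cf.\ \citet{bingham:goldie:teugels:1989:regular}) together with smoothness of the short-memory factor, and I would cite them rather than re-derive them; everything else is the bookkeeping already carried out in~(\ref{eq:bound zero case})--(\ref{eq:block cond nu beta}).
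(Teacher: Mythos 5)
Your proposal is correct and follows essentially the same route as the paper: reduce Assumption A3 to Condition (\ref{eq:block cond}) via Proposition \ref{Pro:easy bound conseq}, identify the zero of order $\nu=1-2H$ at the origin so that $\lambda_m\asymp m^{-\nu}$ by Theorem 2.2 of \citet{simonenko:2005:asymptotic}, take $M_\gamma(k)=O(k^{-(2-2H)})$ so $\beta=2-2H$, and conclude from (\ref{eq:block cond nu beta}) since $\beta/(1+\nu)=1$. Your added remark that the smooth, strictly positive short-memory factor in FARIMA$(p,d,q)$ alters neither the order of the zero nor the covariance envelope is a sensible point of care that the paper leaves implicit.
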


\begin{remark}
We also mention that in  \citet{Zhang:Ho:Wendler:Wu:2013} which studies non-self-normalized block sampling for sample mean, the condition $b_n=o(n^{1-\epsilon})$ for arbitrarily small $\epsilon>0$ is shown to suffice for consistency.
The framework in their paper  assumes $\{X_i\}$ to be a univariate nonlinear transform of linear \emph{non-Gaussian} processes.
But it is not clear how to adapt their proof to a setting involving the self-normalization considered here.
\end{remark}

\subsection{Strong mixing case}

Given a stationary process $\{X_i\}$,  let $\mathcal{F}_{a}^b$ be the $\sigma$-field generated by $X_a,\ldots, X_b$,  where $ -\infty\le a\le b\le +\infty$. Recall that the strong mixing  (or $\alpha$-mixing) coefficient is defined as
\begin{equation}\label{eq:mixing coef}
\alpha(k)=\sup\left\{|P(A)P(B)-P(A\cap B)|, ~A\in \mathcal{F}_{-\infty}^0,B\in \mathcal{F}_{k}^{\infty} \right\}.
\end{equation}
Note that $0\le \alpha(k)\le 1$.
The process
$\{X_i\}$ is said to be \emph{strong mixing} if
\[
\lim_{k\rightarrow+\infty}\alpha(k)=0.
\]
We refer the reader to \citet{Bradley:2007} for more details. We shall use the following inequality which can be found in Lemma A.0.2  of \citet{politis:1999:subsampling}.
\begin{lemma}\label{Lem:mixing}
If $U\in \mathcal{F}_{-\infty}^0$ and $V\in \mathcal{F}_k^\infty$, and $ 0\le U, V\le 1$ almost surely, then
\[
|\Cov(U,V)|\le \alpha(k)\le 1.
\]
\end{lemma}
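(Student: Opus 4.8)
The plan is to reduce to the case of indicator functions, where the asserted bound is essentially the definition of the strong mixing coefficient, and then pass to general bounded $U,V$ via a layer-cake (distribution-function) decomposition.

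First I would record the elementary set-theoretic bound: if $A\in\mathcal{F}_{-\infty}^0$ and $B\in\mathcal{F}_k^\infty$, then $|P(A\cap B)-P(A)P(B)|\le\alpha(k)$ directly from the definition (\ref{eq:mixing coef}); moreover $|P(A\cap B)-P(A)P(B)|\le 1$ because $0\le P(A\cap B),\,P(A)P(B)\le 1$, which also yields the second inequality $\alpha(k)\le 1$.

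Next, since $0\le U,V\le 1$ almost surely, I would write $U=\int_0^1 \mathbbm{1}\{U>s\}\,ds$ and $V=\int_0^1 \mathbbm{1}\{V>t\}\,dt$. Multiplying these representations and applying Fubini's theorem (legitimate because the integrands are nonnegative and bounded), one obtains
\[
\Cov(U,V)=\E[UV]-\E[U]\E[V]=\int_0^1\!\!\int_0^1\Big(P(U>s,\,V>t)-P(U>s)\,P(V>t)\Big)\,ds\,dt .
\]
Because $U$ is $\mathcal{F}_{-\infty}^0$-measurable and $V$ is $\mathcal{F}_k^\infty$-measurable, the level sets satisfy $\{U>s\}\in\mathcal{F}_{-\infty}^0$ and $\{V>t\}\in\mathcal{F}_k^\infty$ for every $s,t\in[0,1]$, so the integrand is bounded in absolute value by $\alpha(k)$. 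Integrating over $[0,1]^2$ gives $|\Cov(U,V)|\le\alpha(k)$, and combining with $\alpha(k)\le 1$ completes the proof.

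There is no genuine obstacle here: the only points needing a word are the measurability of the level sets $\{U>s\}$ and $\{V>t\}$ and the interchange of expectation with the $ds\,dt$ integration, both immediate since $U,V$ are bounded. Alternatively, since this statement is exactly Lemma A.0.2 of \citet{politis:1999:subsampling}, one may simply cite it rather than reproduce the short argument.
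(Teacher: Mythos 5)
Your argument is correct. Note, however, that the paper does not actually prove this lemma: it simply cites Lemma A.0.2 of \citet{politis:1999:subsampling}, so any self-contained derivation is "different from the paper" by default. Your layer-cake reduction $U=\int_0^1 \mathbbm{1}\{U>s\}\,ds$, $V=\int_0^1 \mathbbm{1}\{V>t\}\,dt$, followed by Fubini and the defining inequality for $\alpha(k)$ applied to the level sets $\{U>s\}\in\mathcal{F}_{-\infty}^0$ and $\{V>t\}\in\mathcal{F}_k^\infty$, is a clean and complete proof; integrating a quantity bounded by $\alpha(k)$ over the unit square gives exactly $|\Cov(U,V)|\le\alpha(k)$, and $\alpha(k)\le 1$ is immediate from $0\le P(A\cap B),P(A)P(B)\le 1$. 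One small point worth appreciating: the textbook covariance inequality for strong mixing is often stated as $|\Cov(U,V)|\le 4\|U\|_\infty\|V\|_\infty\,\alpha(k)$, which for $[0,1]$-valued variables only yields the constant $1$ after recentering $U$ and $V$ by $1/2$; your representation delivers the constant-free bound directly, which is precisely the form the paper needs. Your closing remark that one may simply cite the reference is in fact what the authors do.
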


We shall assume:
\bigskip

\fbox{\parbox{\textwidth}{
\begin{enumerate}[\textbf{{B}}1.]
\item $\{X_i\}$ is a strong mixing stationary process with mean $\mu=\E X_i$.
\item We have the weak convergence in $D[0,1]$ endowed with $M_2$ topology of the partial sum:
\[
\left\{\frac{1}{n^{H}\ell(n)}(S_{\lfloor nt \rfloor} - n\mu),\ 0 \leq t \leq 1\right\} \Rightarrow \left\{Y(t),\ 0 \leq t \leq 1\right\},
\]
for some nonzero $H$-sssi process $Y(t)$, where $0<H<1$ and $\ell(\cdot)$ is a slowly varying function.
\item  The block size $b_n\rightarrow\infty$ and $b_n=o(n)$ as $n\rightarrow\infty$.
\end{enumerate}
}}

\bigskip

The following theorem establishes the consistency of the self-normalized block sampling under the strong mixing framework.
\begin{theorem}\label{Thm:main mixing}
The conclusions of Theorem \ref{Thm:main gaussian} and of  Corollary \ref{Cor:gaussian}  hold under Assumptions B1--B3.
\end{theorem}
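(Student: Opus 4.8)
The plan is to mirror the three-step proof of Theorem \ref{Thm:main gaussian}, replacing the canonical-correlation bound used there with the strong-mixing covariance bound of Lemma \ref{Lem:mixing}. The key observation is that the only place Assumption A1 and the Gaussian maximal-correlation machinery (Lemmas \ref{Lem:canonical}, \ref{Lem:naive bound}) entered the earlier argument was in bounding $\Var[\widehat{F}_{n,b_n}^*(x)]$ via the between-block canonical correlations $\rho_{k,b_n+l}$; everything else --- the bias-variance decomposition, the continuous-mapping reduction (Lemma \ref{Lem:cont mapping}), and Steps 2 and 3 --- used only Assumptions A2 and A3, which are the exact analogues of B2 and B3. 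So the task reduces to re-running Step 1 under strong mixing.

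First I would reprove Lemma \ref{Lem:cont mapping} in this setting: since B2 is identical to A2, and integration is continuous on $D[0,1]$ under $M_2$ by Lemma \ref{Lem:int cont}, the continuous mapping theorem gives $T_{i,b_n}^*\ConvD T$ and also $(b_n^H\ell(b_n))^{-1}D_{i,b_n}\ConvD D$ exactly as before; these facts do not use the data-generating structure at all. Then for Step 1, the bias-variance decomposition is unchanged, the squared bias $[P(T_{1,b_n}^*\le x)-P(T\le x)]^2\to 0$ by the just-established weak convergence, and (\ref{eq:var bound}) still holds by stationarity of $\{T_{i,b_n}^*\}_i$. For the covariance bound (\ref{eq:cov bound}), note that $T_{1,b_n}^*$ is a function of $X_1,\ldots,X_{b_n}$, hence measurable with respect to $\mathcal{F}_{-\infty}^{b_n}$ (in fact $\mathcal{F}_1^{b_n}$), while $T_{k+1,b_n}^*$ is a function of $X_{k+1},\ldots,X_{k+b_n}$, hence measurable with respect to $\mathcal{F}_{k+1}^\infty$. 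Shifting indices by stationarity, for $k\ge b_n$ the gap between the two blocks is $k-b_n+1\ge 1$, so $U:=\I\{T_{1,b_n}^*\le x\}$ and $V:=\I\{T_{k+1,b_n}^*\le x\}$ satisfy the hypotheses of Lemma \ref{Lem:mixing} after translating $U$ into $\mathcal{F}_{-\infty}^0$, giving $|\Cov(U,V)|\le \alpha(k-b_n)$ (with the convention $\alpha(j)\le 1$ for $j\le 0$). Plugging into (\ref{eq:var bound}) yields
\begin{equation*}
\Var[\widehat{F}_{n,b_n}^*(x)]\le \frac{2}{n-b_n+1}\left( b_n + \sum_{k=b_n}^n \alpha(k-b_n)\right)\le \frac{2}{n-b_n+1}\left( b_n + \sum_{j=0}^n \alpha(j)\right).
\end{equation*}
Since $b_n=o(n)$ by B3 and $\alpha(j)\to 0$ by B1, Cesàro averaging gives $\sum_{j=0}^n\alpha(j)=o(n)$, so the right-hand side tends to zero. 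This establishes (\ref{eq:reduced goal}), i.e. $\widehat{F}_{n,b_n}^*(x)\ConvP F(x)$ for $x\in C(F)$.

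Steps 2 and 3 then carry over verbatim: Step 2 only used that $b_n=o(n)$ and $H<1$ (so $b_n(\bar X_n-\mu)(b_n^H\ell(b_n))^{-1}\ConvP 0$ from B2), the weak convergence of $(b_n^H\ell(b_n))^{-1}D_{i,b_n}$ to $D$ with $P(D>0)=1$, and the observation that $D_{i,b_n}$ depends only on $X_i,\ldots,X_{i+b_n-1}$ --- so that the empirical-distribution argument of Step 1, now based on the mixing bound above rather than the canonical-correlation bound, applies to $(b_n^H\ell(b_n))^{-1}D_{i,b_n}$ as well, yielding (\ref{eq:again apply}) and hence $R_n(\epsilon)\ConvP 1$. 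The sandwiching inequalities (\ref{eq:to be modified}) and its lower counterpart are purely algebraic. Step 3 is the diagonal-subsequence argument for uniform convergence when $F$ is continuous, unchanged. I expect the only mild subtlety --- hardly an obstacle --- to be the careful bookkeeping that a function of $X_1,\ldots,X_{b_n}$ lies in $\mathcal{F}_1^{b_n}\subseteq \mathcal{F}_{-\infty}^{b_n}$ and that after a stationarity shift the block separation is $k-b_n+1$ rather than $k$, so that one should sum $\alpha(k-b_n)$ and split off the first $b_n$ terms; since $b_n=o(n)$ this costs nothing. One should also note that Corollary \ref{Cor:gaussian}'s proof cites ``Theorem \ref{Thm:main mixing} or \ref{Thm:main gaussian}'', so the present theorem feeds back into that corollary with no circularity, the consistency statement (\ref{eq:consistency}) following from the triangle inequality together with (\ref{eqn:general self norm limit}) exactly as in Corollary \ref{Cor:gaussian}.
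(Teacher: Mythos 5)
Your proposal is correct and follows essentially the same route as the paper: keep the bias--variance decomposition and Steps 2--3 verbatim, and replace the Gaussian canonical-correlation bound in Step 1 by the strong-mixing covariance bound of Lemma \ref{Lem:mixing}, splitting the sum at $k=b_n$ and using $b_n=o(n)$ together with Ces\`aro summation of $\alpha(\cdot)$. (The paper uses $\alpha(k-b_n+1)$ where you write $\alpha(k-b_n)$; since $\alpha$ is non-increasing this is an immaterial off-by-one.)
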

\begin{proof}
The structure of the proof and many details are similar to those of Theorem \ref{Thm:main gaussian}.
 We only highlight the key differences. See also  \citet{politis:1999:subsampling} or \citet{sherman:carlstein:1996:replicate}.

In Step 1, we again need to show (\ref{eq:reduced goal}). The term $[P(T_{i,b_n}^*\le x)-P(T\le x)]^2\rightarrow 0$ as before. We need to establish $\Var[\widehat{F}_{n,b_n}^*(x)]\rightarrow 0$.  We still have the bound (\ref{eq:var bound}).

In view of Lemma \ref{Lem:mixing}, one has that,
\begin{align*}
\big|\Cov[\I\{T_{1,b_n}^*\le x\},\I\{T_{k+1,b_n}^*\le x\}]\big|\le
\begin{cases}
1 \quad&\text{ if }  k<b_n, \\
 \alpha(k-b_n+1),  \quad& \text{ if } k\ge b_n;
\end{cases}
\end{align*}
where  $\alpha(\cdot)$ is the mixing coefficient in (\ref{eq:mixing coef}). Hence from (\ref{eq:var bound}), we have
\begin{align}\label{eq:var bound mixing}
\Var[\widehat{F}_{n,b_n}^*(x)]&\le \frac{2}{n-b_n+1} \left(\sum_{k=0}^{b_n-1} \big|\Cov\big[\I\{T_{1,b_n}^*\le x\},\I\{T_{k+1,b_n}^*\le x\}\big]\big|\notag \right.  
\\  &~~ \left.+\sum_{k=b_n}^{n} \big|\Cov\big[\I\{T_{1,b_n}^*\le x\},\I\{T_{k+1,b_n}^*\le x\}\big]\big| \right)\notag
\\
&\le
 \frac{2}{(n-b_n+1)}\left[ b_n+ \sum_{k=b_n}^{n} \alpha(k-b_n+1)\right]\notag
 \\&=
 \frac{2b_n}{(n-b_n+1)}+ \frac{2}{(n-b_n+1)} \sum_{k=1}^{n-b_n+1} \alpha(k) ,
\end{align}
which converges to zero as $n\rightarrow\infty$, because $b_n=o(n)$ by Assumption B3 , and  $\alpha(k)\rightarrow 0$ as $k\rightarrow\infty$ by Assumption B1 and by applying a Ces\`aro summation.
Hence (\ref{eq:reduced goal}) is proved.

Step 2 and 3 proceed exactly as the proof of Theorem \ref{Thm:main gaussian}. The argument in the proof of Corollary \ref{Cor:gaussian} shows that the conclusion of that corollary continues to hold under Assumptions B1--B3.
\end{proof}

\begin{remark}
In view of \citet{Shao:2010}, the self-normalized block sampling method considered in this paper may be extended to more general statistics beyond the sample mean. There are two aspects to consider, self-normalization and block sampling.  For the \emph{self-normalization} aspect to work,  the general statistics   needs to be approximately linear, namely, it admits a functional Taylor expansion in the sense of (2) in \citet{Shao:2010}. In this case,   Assumption A2 or B2 needs to be replaced by a modified version of Assumption 1 of \citet{Shao:2010}. Furthermore, the remainder term in the aforementioned functional Taylor expansion has to satisfy a negligibility condition (see Assumption 2 of \citet{Shao:2010} or Assumption II of \citet{Shao:2015}). Validating these conditions for particular statistics (e.g., sample quantiles) and particular models (e.g., the Gaussian subordination model in Assumption A1) may be considered in future work.  The  \emph{block sampling} aspect is likely to continue to be valid, since as shown in the proofs of Theorem \ref{Thm:main gaussian} and \ref{Thm:main mixing}, the key is to have a bound on the between-block correlation,  as the one in Proposition \ref{Pro:new bound} in the long-memory Gaussian subordination framework,  or as in Lemma \ref{Lem:mixing} in the strong mixing framework. 
\end{remark}

\section{Examples}\label{sec:example}
The first two examples of models concern Assumptions A1--A3.  They both involve a phase transition.
\begin{example}\label{eg:gauss light}
Suppose that
\[X_i=G(Z_i)=Z_i^2,\]
where $\{Z_i\}$ is a standardized stationary Gaussian process with covariance $\gamma(n)=n^{2d-1}L(n)$, with $d\in (0,1/2)$, and $L(n)$ is a positive slowly varying function.  Then Assumption A1 is satisfied. Moreover,
by  \citet{Taqqu:1975} in the case $d<1/4$ and \citet{breuer:major:1983:central}  and \citet{chambers:slud:1989:central} in the case $d>1/4$,  Assumption A2
holds with the following dichotomy:
\[
\begin{cases}
H=1/2, ~\ell(n)=1, ~ Y(t)= \sigma B(t)  \quad &\text{ if }d<1/4;\\
H=2d, ~\ell(n)=L(n), ~  Y(t)=  c_H Z_{2,H}(t) \quad &\text{ if }d>1/4,
\end{cases}
\]
where $\sigma^2=\sum_{n}\Cov[X(n),X(0)]$, $c_H$ is a positive constant, $B(t)$ is the standard Brownian motion and $Z_{2,H}$ is the standard Rosenblatt process (second-order Hermite process). Assume in addition that the assumptions for $\{Z_i\}$ in Proposition \ref{Pro:new bound} hold.  Then one can choose a block size $b_n=o(n)$ to satisfy Assumption A3. Hence Theorem \ref{Thm:main gaussian} and Corollary \ref{Cor:gaussian} hold. Without the additional assumptions in Proposition \ref{Pro:new bound}, Assumption A3 is guaranteed  at least by the choice  $b_n=o(n^{1-2d}L(n)^{-1})$   in view of (\ref{eq:B_N long memory}).
\end{example}

\medskip
\begin{example}
Let $F_\alpha$ be the cdf of $t_\alpha$ distribution with $1<\alpha<2$, so that it has finite mean but infinite variance. Let $\Phi$ be the cdf of a standard normal. 
Suppose that
\[X_i=F_\alpha^{-1}(\Phi(Z_i)),\]
where $\{Z_i\}$ is a standardized stationary Gaussian process with covariance $\gamma(n)=n^{2d-1}L(n)$,  $d\in (0,1/2)$, and $L(n)$ is a positive slowly varying function. The marginal distribution of $\{X_i\}$ is a $t_\alpha$.   Then Assumption A1 is satisfied. By Sly and Heyde (2008), Assumption A2
holds with the following dichotomy (for $0<d<1/2$, $1<\alpha<2$):
\[
\begin{cases}
H=1/\alpha, ~\ell(n)=1, ~ Y(t)=  c_1  L_{\alpha}(t)\quad &\text{ if }d+1/2<1/\alpha;\\
H=d+1/2, ~\ell(n)=L(n), ~  Y(t)=  c_2 B_{H}(t) \quad &\text{ if } d+1/2>1/\alpha,
\end{cases}
\]
where $c_1$ and $c_2$ are  positive constants, $L_{\alpha}(t)$ is a symmetric $\alpha$-stable L\'evy process, and $B_{H}(t)$ is a standard fractional Brownian motion. Assume in addition that the assumptions for $\{Z_i\}$ in Proposition \ref{Pro:new bound} hold.  This  will be the case if $\{Z_i\}$ is fractional Gaussian noise or FARIMA$(p,d,q)$. Then   $b_n=o(n)$ implies (\ref{eq:block cond gen}). Hence Theorem \ref{Thm:main gaussian} and Corollary \ref{Cor:gaussian} hold. Without the additional assumptions in Proposition \ref{Pro:new bound}, Assumption A3 is guaranteed  at least by the choice  $b_n=o(n^{1-2d}L(n)^{-1})$   in view of (\ref{eq:B_N long memory}).
\end{example}

\begin{example}\label{eg:gauss heavy}
Consider the following long-memory stochastic duration (LMSD) model (for modeling inter-trade duration, see \citet{deo:hsieh:hurvich:2010:long}):
\[X_i = \xi_i\exp(Z_i) ,\]
where  $\{\xi_i\}$ are i.i.d.\ positive random variables satisfying
$P(\xi_i>x)\sim A x^{-\alpha}$ as $x\rightarrow\infty$, $A>0$, $\alpha\in (1,2)$,  $Z_i$ is a Gaussian linear process  $Z_i=\sum_{j=1}^\infty j^{d-1}l(j)\epsilon_{i-j}$ with $d\in (0,1/2)$, $l(j)$ a positive and slowly varying function, $\{\epsilon_i\}$ i.i.d.\ centered Gaussian,  and $\{\epsilon_i\}$ is independent of $\{\xi_i\}$. Note that  $\mu=\E X_i>0$. The model has the interesting feature that  although $\E X_i^2=\infty$, it has the following   finite covariance  for $h\neq 0$, namely,
\[
\Cov[X_i,X_{i+h}]=\Cov[\exp(Z_0),  \exp(Z_h)] \mu_\xi^2 \sim c h^{2d-1} l^2(h),
\]
as $h\rightarrow\infty$,
where $\mu_\xi=\E \xi_i$, and we have used the fact that the exponential function has Hermite rank 1 (see \citet{Taqqu:1975}).
To satisfy Assumption A1, one can rewrite the model as
\[X_i=g(Z_i')\exp(Z_i),\]
where $\{Z_i'\}$ are i.i.d.\ standard Gaussian with $g$  chosen such that $g(Z_i')$ is equal in distribution to $\xi_i$. This makes the model satisfy Assumption A1 with $J=2$, $l=0$, $\mathbf{Z}_i=(Z_i',Z_i)$ and $G(x_1,x_2)=g(x_1)\exp(x_2) $.
By (4.100) and (4.101) of \cite{beran:2013:long}, Assumption A2
holds with the following dichotomy:
\[
\begin{cases}
H=1/\alpha, ~\ell(n)=1, ~ Y(t)=  c_\alpha  L_{\alpha,1,1}(t)\quad &\text{ if }d+1/2<1/\alpha;\\
H=d+1/2, ~\ell(n)=l^2(n), ~  Y(t)=  c_d B_H(t) \quad &\text{ if } d+1/2>1/\alpha,
\end{cases}
\]
where $c_\alpha$, $c_{d}$ are positive constants, $L_{\alpha,1,1}(t)$ is an $\alpha$-stable  L\'evy process with skewness $\beta=1$ (see (\ref{eqn:SntSRDStable})), and $B_H(t)$ is the standard fractional Brownian motion. If in addition, the assumptions for $\{Z_i\}$ in Proposition \ref{Pro:new bound} hold,  then Assumption A3 is satisfied if $b_n=o(n)$. Hence Theorem \ref{Thm:main gaussian} and Corollary \ref{Cor:gaussian} hold. Without the additional assumptions in Proposition \ref{Pro:new bound}, Assumption A3 is at least satisfied if $b_n=o(n^{1-2d}l(n)^{-2})$ (see (\ref{eq:B_N long memory}) and Remark \ref{Rem:indep}).
\end{example}

\medskip
\begin{remark}
Consider the non-centered stochastic volatility model $X_i=\sigma_ig(Z_i)+\mu$ in \citet{jach:2012:subsampling}, where $\sigma_i$ and $g(Z_i)$ are independent, $\sigma_i$ is i.i.d.\ with heavy tails and $\{Z_i\}$ is Gaussian with long-range dependence and $g$ has Hermite rank one. This model can be  similarly embedded into Assumption A1. However, as far as we know, the functional convergence\footnote{The weak convergence  assumed in Assumption A2 allowed us to take advantage of Lemma \ref{Lem:int cont} in order to establish Lemma \ref{Lem:cont mapping}.} needed in Assumption A2   has not been established  (only the marginal convergence was established in \citet{jach:2012:subsampling}).  Assumption A2 for this model is, nevertheless, expected to hold in view of its similarity\footnote{Both \citet{jach:2012:subsampling} and \citet{kulik:soulier:2012:limit} treated stochastic volatility models of the form $X_i=L_iH_i$ (for limit theorems it does not matter whether a level is added or not), where $L_i$ has finite variance and is long-range dependent, while $H_i$  has infinite variance and is i.i.d.. The difference between the two papers is that in \citet{jach:2012:subsampling} $L_i$ is  centered and $H_i$ is not, while in \citet{kulik:soulier:2012:limit} $H_i$ is  centered and $L_i$ is not.} to the model treated in \citet{kulik:soulier:2012:limit}, Theorem 4.1 (see also Theorem 4.19 of \cite{beran:2013:long}).  Checking Assumption A2 in details is outside the scope of the current paper. Assumption A3 is dealt with as in Example \ref{eg:gauss heavy}.


Nevertheless,  the consistency of the self-normalized block sampling  in \citet{jach:2012:subsampling} can be shown to hold under our A1 and A3 framework. This is done by adopting the normalization of \citet{jach:2012:subsampling}, with A2 replaced by marginal convergence involving partial sums and sample covariances\footnote{More precisely, convergence in distribution of a 3-dimensional vector specified in Theorem 3 of \citet{jach:2012:subsampling}.},
and to ensure A3, by  assuming $b_n=o(n)$ and that $\{Z_i\}$ is a long-range dependent sequence satisfying the assumptions of Proposition \ref{Pro:new bound}.
\end{remark}

We now give two examples with \emph{strong mixing}. The first involves a nonlinear time series and the second involves heavy tails.

\begin{example}\label{eg:mix light}
Suppose that
\begin{equation}\label{eqn:TAR}
X_i = \rho |X_{i-1}| + \epsilon_i, \quad 0<\rho<1,
\end{equation}
where $\epsilon_i$'s are i.i.d.\ standard Gaussian. Thus $\{X_i\}$ follows a threshold autoregressive model (\citet{Tong:1990}).
The Markov process $\{X_i\}$ is strong mixing because it is ergodic\footnote{that is, the Markov chain is irreducible aperiodic and positive recurrent (see \citet{tweedie:1975:sufficient}).} (see  \citet{petruccelli:woolford:1984:threshold}, Theorem 2.1, or \citet{doukhan:1994:mixing} p.103), and hence Condition B1 holds.
The conditions of Theorem 3(ii) of  \citet{Wu:2005} are satisfied\footnote{In the terminology of \citet{Wu:2005},  $R(x,\epsilon)=\rho|x|+\epsilon$, $L_\epsilon=\rho$, $\delta_p(n)=O(n^r)$ for some $0<r<1$, so that $\sum_{n=0}^\infty n \delta_p(n)<\infty$, implying Theorem 3(ii).} and therefore Condition B2 holds with $H=1/2$, $\ell(n)=1$ and $Y(t)=\sigma B(t)$, where $\sigma^2=\sum_{n}\gamma(n)>0$ and $B(t)$ is standard Brownian motion. Condition B3 holds for any block size $b_n=o(n)$. Therefore, Theorem \ref{Thm:main mixing} holds.
\end{example}

In the following example, both Assumptions A1--A3 and B1--B3 hold.

\begin{example}\label{Eg:mix heavy}
Consider the MA(1) model
\[
X_i=\epsilon_i+a\epsilon_{i-1},
\]
where $a\ge 0$ and $\{\epsilon_i\}$ are i.i.d.. Assume that $\E \epsilon_i=0$, $\E \epsilon_i^2=\infty$, and $\epsilon_i$  is in the domain of attraction of a stable distribution with an index $\alpha\in (1,2)$. Let $b_n=o(n)$. By choosing appropriate transforms, we can express $\epsilon_i$ as function of Gaussian. Therefore Assumption A1 holds. Assumption B1 holds because $\{X_i\}$ is $2$-dependent.
 By  Theorem 2' of \citet{avram:taqqu:1992:weak},  Assumptions A2 and B2 hold with $H=1/\alpha$, some slowly varying function $\ell(n)$, and $Y(t)$ is an $\alpha$-stable L\'evy process. Also A3 holds with any $b_n=o(n)$ since $\rho_{k,m}=0$ when $k\ge m+2$. Therefore, both assumptions  A1--A3 and B1--B3 hold in this case.
\end{example}

\section{Monte Carlo Simulations}\label{sec:numericalexperiments}
We shall  carry out here Monte Carlo simulations to examine the finite-sample performance of the self-normalized block sampling (SNBS) method and make a comparison with the recent result of \citet{Zhang:Ho:Wendler:Wu:2013}. Instead of resorting to self-normalization, the method of \citet{Zhang:Ho:Wendler:Wu:2013} exploits the regularly varying property of the asymptotic variance to avoid the problem of estimating the nuisance Hurst index. We first consider the case with Gaussian subordination. For this, let
\begin{equation}\label{eqn:Xisimulation}
X_i = K(Z_i),\quad Z_i = \sum_{j=0}^\infty a_j \epsilon_{i-j},\quad i = 1,\ldots,n,
\end{equation}
where $K(\cdot)$ is a possibly nonlinear transformation and $\{\epsilon_k\}$ are i.i.d.\  standard normal random variables\footnote{To generate the process, we use the approximation $Z_i \approx \sum_{j=0}^{\lfloor n^{3/2} \rfloor - 1} a_j \epsilon_{i-j}$ in our simulation, and the fast Fourier transform (FFT) as mentioned in \citet{Wu:Michailidis:Zhang:2004} is implemented to facilitate the computation. Note that the cutoff $n^{3/2}$ is much greater than the sample size $n$.}. We consider the following configurations for (\ref{eqn:Xisimulation}):
\begin{enumerate}
\item[(a)] $K(x) = x$ and $a_j = (1+j)^{d-1}$, $j \geq 0$;
\item[(b)] $K(x) = x^2$ and $a_j = (1+j)^{d-1}$, $j \geq 0$;
\item[(c)] $K(x) = \Phi_t^{-1}[\Phi_N\{(\sum_{j=0}^\infty a_j^2)^{-1/2}x\}]$ and $a_j = (1+j)^{d-1}$, $j \geq 0$,
\end{enumerate}
where $\Phi_N$ is the CDF of the standard normal and $\Phi_t$ is the CDF of the Student's $t$-distribution with degree of freedom $1.5$, whose tail probability decays like $|x|^{-3/2}$ as $|x|\rightarrow\infty$ so that it has infinite variance but finite mean.

Case (a) represents the Gaussian linear process which has been extensively used in the literature for modeling time series data.  It has long-range dependence if $0<d<1/2$.  We let $d \in \{0.25,-1\}$. The choice $d=0.25$ corresponds to long-range dependence (LRD) and the choice $d=-1$ corresponds to short-range dependence (SRD).

Case (b) involves an additional nonlinear transformation and now $\{X_i\}$ is LRD if $0.25<d<0.5$. We let $d \in \{0.4,0.2,-1\}$.  When $d=0.4$, both $\{Z_i\}$ and $\{X_i\}$ have LRD (the limit for $\{X_i\}$ is the Rosenblatt process); when $d=0.2$, $\{Z_i\}$ has LRD and $\{X_i\}$ has SRD (the limit for $\{X_i\}$ is Brownian motion); when $d=-1$, both $\{Z_i\}$ and $\{X_i\}$  have SRD (the limit for $\{X_i\}$ is Brownian motion). See for example \citet{Wu:2006} and \citet{Zhang:Ho:Wendler:Wu:2013}.

Case (c) corresponds to a process $\{X_i\}$ with marginal distribution $t$ with $1.5$ degrees of freedom and hence with infinite variance. We let $d \in \{0.4,0.2,-1\}$. When $d=0.4$ and  $d=0.2$, both $\{Z_i\}$ and $\{X_i\}$ have LRD (the limit for $\{X_i\}$ is the fractional Brownian motion); when $d=-1$, both $\{Z_i\}$ and $\{X_i\}$ have SRD (the limit for $\{X_i\}$ is symmetric $(3/2)$-stable L\'evy motion). See \citet{sly:heyde:2008:nonstandard} for the boundary between SRD and LRD in the heavy tail case.
We also consider the situation with a non-constant slowly varying function, where we let $a_j = (1+j)^{d-1} \log(1+j)$, $j \geq 0$, and denote the corresponding cases by (a$^*$), (b$^*$) and (c$^*$), respectively.

We consider the problem of constructing the lower and upper one-sided confidence interval where the nominal level is taken as 90\%; see also \citet{Nordman:Lahiri:2005} and \citet{Zhang:Ho:Wendler:Wu:2013} for similar performance assessment of this type. Following \citet{Zhang:Ho:Wendler:Wu:2013}, we use throughout the block sizes  $b_n = \lfloor cn^{0.5} \rfloor$, $c \in \{0.5,1,2\}$. This  does not necessarily represent the optimal choice of $b_n$, but provides us with a spectrum of reasonable block sizes in our finite-sample simulations. For each realization we compute the self-normalized block sums and its empirical distribution function $\hat F_{n,b_n}$ as in (\ref{eq:F_N^*}).  Examples of realized $\hat F_{n,b_n}$ can be found in Figure \ref{fig:cdf} for models (a)--(c) with different choices of $d$.
Let $q_\alpha$ ($\alpha$=10\%) be the 10\%-quantile of $\hat F_{n,b_n}$, then the lower 90\% one-sided confidence interval can be constructed as
\begin{equation*}
\left(-\infty~,~\bar X_n - n^{-1}\left\{n^{-1}\sum_{k=1}^n (S_{1,k}-\frac{k}{n}S_{1,n})^2\right\}^{1/2} q_\alpha\right];
\end{equation*}
Similarly, if $q_{1-\alpha}$ ($1-\alpha$=90\%) denotes the $90$\%-quantile of $\hat{F}_{n,b_n}$, then the corresponding uppper $90$\% one-sided confidence interval is 
\begin{equation*}
\left[\bar X_n - n^{-1}\left\{n^{-1}\sum_{k=1}^n (S_{1,k}-\frac{k}{n}S_{1,n})^2\right\}^{1/2} q_{1-\alpha}~,~+\infty\right).
\end{equation*}
See (\ref{eq:CI}) for details.

In Tables \ref{tab:simulation} and \ref{tab:simulationlog}, we report the empirical coverage probabilities of the constructed confidence intervals based on 5000 realizations for each scenario\footnote{When evaluating the empirical coverage probability of the constructed confidence interval, we use the averaged mean of 1000 realizations as an approximation to the true mean.}. For example, 
Table \ref{tab:simulation} displays the following results of simulation. If $d=0.25$, $c=0.5$ and $n=100$, then the self-normalized block sampling (SNBS) simulation yielded the following:  the lower $90$\% confidence interval included the unknown mean $\mu$,  $88.3$\% of the times and the upper $90$\% confidence interval included the unknown mean $\mu$, $91.1$\% of the times.  We also report the results of the subsampling method of \citet{Zhang:Ho:Wendler:Wu:2013} for a comparison in the column ZHWW2013. Note that the method of \citet{Zhang:Ho:Wendler:Wu:2013} does not take advantage of the technique of self-normalization and therefore it requires an additional bandwidth to utilize the regularly varying property of the asymptotic variance.\footnote{In Tables \ref{tab:simulation} and \ref{tab:simulationlog}, we let the second bandwidth be $l_n = \lfloor  n^{0.9} \rfloor $ when using the method of \citet{Zhang:Ho:Wendler:Wu:2013}.  Many other choices are possible.  We also used $l_n = \lfloor 0.5 n^{0.9} \rfloor$ and obtained similar results.}.

\begin{figure}[!t]
\centering
\includegraphics[width = \textwidth]{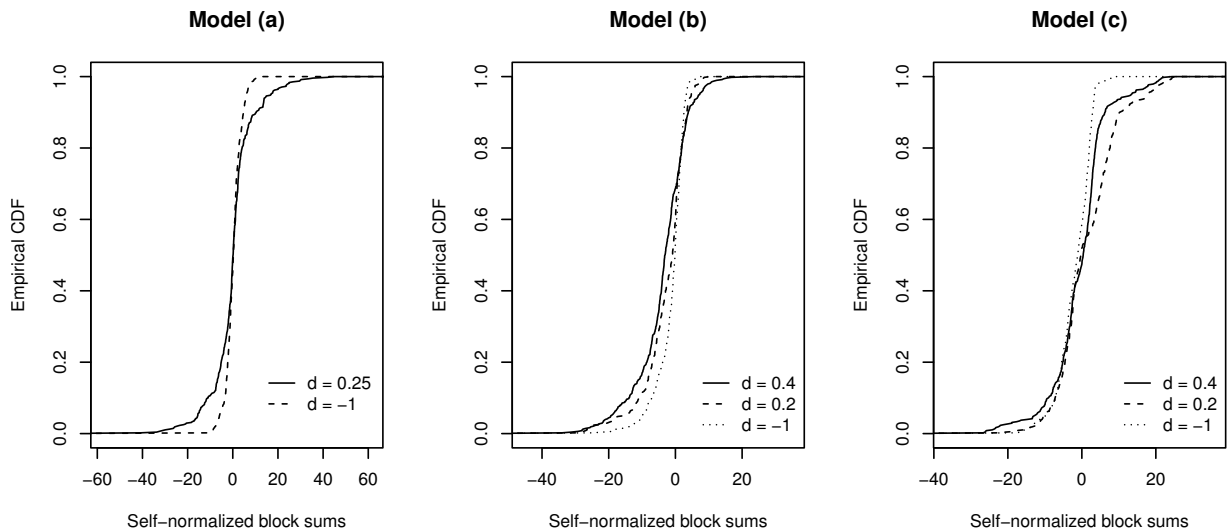}
\caption{Examples of realized $\hat F_{n,b_n}$ for models (a)--(c) with $n = 500$, $c = 1$ and different choices of $d$. The $x$-axis represents the self-normalized block sums, which have been appropriately centered and scaled. }\label{fig:cdf}
\end{figure}

It can be seen from Tables \ref{tab:simulation} and \ref{tab:simulationlog} that the method proposed in this paper performs reasonably well, as most of the empirical coverage probabilities are reasonably close to their nominal level of 90\%, except for situations with heavy tails where  deviations under small sample sizes are expected. However, the results seem to improve as the sample size increases from $n = 100$ to $n = 500$ and the performance is comparable to the method of \citet{Zhang:Ho:Wendler:Wu:2013}\footnote{The theoretical assumptions in \citet{Zhang:Ho:Wendler:Wu:2013} do not allow for infinite variance.}. Note that the choice of sample size $n = 100$ is considered to be challengingly small for inference of long-range dependent processes. Because of self-normalization, our method has the advantage over the one by \citet{Zhang:Ho:Wendler:Wu:2013} in  not requiring the choice of a second bandwidth.

\begin{table}[!t]\footnotesize
\centering
\begin{tabular}{ccccccc}
\hline \hline
 & & \multicolumn{2}{c}{n = 100} & & \multicolumn{2}{c}{n = 500} \\ \cline{3-4} \cline{6-7}
$d$ & $c$ & SNBS & ZHWW2013 & & SNBS & ZHWW2013 \\
\hline
 & & & \multicolumn{3}{c}{\em Model (a)} & \\
0.25 & 0.5 & (88.3, 91.1) & (86.8, 90.3) & & (92.2, 92.0) & (92.0, 91.5) \\
     & 1   & (86.1, 86.6) & (85.7, 85.3) & & (89.6, 91.2) & (89.3, 91.3) \\
     & 2   & (82.3, 83.7) & (81.0, 82.2) & & (87.5, 87.5) & (87.4, 87.2) \\
-1   & 0.5 & (93.5, 94.2) & (93.0, 92.9) & & (93.2, 93.1) & (92.9, 93.0) \\
     & 1   & (89.5, 90.7) & (89.0, 90.2) & & (91.4, 92.1) & (91.1, 91.7) \\
     & 2   & (87.1, 86.3) & (86.9, 85.6) & & (90.0, 89.0) & (89.9, 89.5) \\
 & & & \multicolumn{3}{c}{\em Model (b)} & \\
0.4  & 0.5 & (90.3, 95.7) & (89.2, 95.2) & & (93.2, 96.2) & (92.9, 95.6) \\
     & 1   & (84.7, 93.6) & (83.8, 92.7) & & (88.2, 94.8) & (88.4, 94.9) \\
     & 2   & (75.9, 91.8) & (75.3, 91.4) & & (84.3, 92.8) & (84.0, 92.9) \\
0.2  & 0.5 & (94.6, 95.8) & (94.0, 94.8) & & (95.7, 96.0) & (95.8, 95.6) \\
     & 1   & (88.8, 93.6) & (88.2, 93.3) & & (93.8, 93.6) & (93.7, 93.9) \\
     & 2   & (81.4, 91.5) & (80.3, 90.8) & & (89.4, 92.0) & (89.3, 91.9) \\
-1   & 0.5 & (97.6, 86.3) & (97.5, 85.5) & & (97.0, 86.0) & (97.0, 86.1) \\
     & 1   & (94.1, 84.2) & (93.5, 83.3) & & (94.5, 86.5) & (94.3, 86.5) \\
     & 2   & (87.2, 84.0) & (86.7, 83.6) & & (91.3, 86.6) & (91.2, 86.7) \\
 & & & \multicolumn{3}{c}{\em Model (c)} & \\
0.4  & 0.5 & (74.8, 84.4) & (72.5, 82.9) & & (82.2, 78.0) & (81.8, 77.1) \\
     & 1   & (78.0, 76.9) & (76.5, 75.8) & & (77.7, 79.3) & (76.9, 78.9) \\
     & 2   & (75.5, 73.4) & (74.8, 72.2) & & (74.6, 78.6) & (73.8, 78.4) \\
0.2  & 0.5 & (78.8, 81.4) & (76.7, 79.0) & & (80.8, 79.9) & (80.0, 79.6) \\
     & 1   & (77.0, 80.6) & (75.9, 79.6) & & (79.1, 80.8) & (78.7, 80.0) \\
     & 2   & (77.9, 74.8) & (76.6, 74.1) & & (81.1, 77.3) & (80.9, 76.3) \\
-1   & 0.5 & (82.3, 83.7) & (80.9, 82.2) & & (83.6, 85.3) & (83.3, 84.2) \\
     & 1   & (84.1, 80.0) & (83.2, 79.4) & & (81.6, 86.0) & (80.6, 85.5) \\
     & 2   & (87.4, 71.2) & (86.2, 70.3) & & (82.0, 82.9) & (81.7, 82.8) \\
\hline
\end{tabular}
\caption{\footnotesize Empirical coverage probabilities of lower and upper (paired in parentheses) one-sided 90\% confidence intervals with different combinations of the index $d$, sample size $n$ and block size $b_n = \lfloor cn^{0.5} \rfloor$ when $a_j = (1+j)^{d-1}$, $j \geq 0$.}\label{tab:simulation}
\end{table}

\begin{table}[!t]\footnotesize
\centering
\begin{tabular}{ccccccc}
\hline \hline
 & & \multicolumn{2}{c}{n = 100} & & \multicolumn{2}{c}{n = 500} \\ \cline{3-4} \cline{6-7}
$d$ & $c$ & SNBS & ZHWW2013 & & SNBS & ZHWW2013 \\
\hline
 & & & \multicolumn{3}{c}{\em Model (a$^*$)} & \\
0.25 & 0.5 & (87.8, 87.9) & (86.4, 86.5) & & (91.9, 91.6) & (92.0, 91.6) \\
     & 1   & (84.0, 84.2) & (82.7, 83.0) & & (90.4, 89.4) & (90.1, 88.9) \\
     & 2   & (78.0, 79.2) & (76.8, 78.6) & & (84.7, 85.1) & (84.4, 84.8) \\
-1   & 0.5 & (93.7, 93.6) & (93.1, 92.5) & & (94.0, 94.4) & (93.6, 94.4) \\
     & 1   & (90.9, 89.8) & (90.1, 88.6) & & (93.2, 91.9) & (92.8, 92.0) \\
     & 2   & (86.4, 86.4) & (85.9, 85.4) & & (90.6, 90.3) & (90.2, 90.1) \\
 & & & \multicolumn{3}{c}{\em Model (b$^*$)} & \\
0.4  & 0.5 & (84.7, 95.1) & (83.3, 94.3) & & (90.3, 98.0) & (90.4, 97.7) \\
     & 1   & (80.4, 92.3) & (79.4, 91.8) & & (86.2, 96.0) & (86.4, 96.0) \\
     & 2   & (71.7, 90.2) & (70.5, 89.6) & & (79.5, 93.6) & (79.6, 93.9) \\
0.2  & 0.5 & (89.3, 96.8) & (88.7, 96.3) & & (94.3, 97.7) & (94.4, 97.5) \\
     & 1   & (83.6, 93.9) & (83.0, 93.3) & & (90.8, 96.7) & (91.1, 96.7) \\
     & 2   & (77.7, 91.2) & (76.8, 90.4) & & (85.1, 95.6) & (85.1, 95.2) \\
-1   & 0.5 & (98.3, 86.3) & (97.9, 85.6) & & (97.1, 87.0) & (97.1, 87.1) \\
     & 1   & (93.1, 85.2) & (92.8, 84.6) & & (95.1, 85.6) & (94.7, 85.7) \\
     & 2   & (88.6, 84.1) & (87.9, 83.5) & & (92.2, 85.9) & (92.1, 85.5) \\
 & & & \multicolumn{3}{c}{\em Model (c$^*$)} & \\
0.4  & 0.5 & (86.3, 85.8) & (84.6, 83.7) & & (92.6, 88.0) & (92.3, 88.0) \\
     & 1   & (83.4, 77.7) & (82.3, 76.1) & & (87.3, 83.3) & (87.7, 83.5) \\
     & 2   & (74.9, 75.2) & (73.2, 74.0) & & (81.1, 81.7) & (81.1, 81.1) \\
0.2  & 0.5 & (83.0, 85.2) & (80.4, 83.3) & & (86.6, 84.6) & (86.8, 84.9) \\
     & 1   & (80.4, 80.5) & (79.5, 78.7) & & (84.4, 81.7) & (84.5, 80.7) \\
     & 2   & (77.9, 73.5) & (76.9, 72.9) & & (80.4, 78.8) & (80.9, 78.3) \\
-1   & 0.5 & (83.9, 83.1) & (82.3, 81.7) & & (88.5, 84.0) & (87.5, 83.0) \\
     & 1   & (80.6, 83.1) & (80.0, 81.7) & & (86.8, 83.4) & (85.9, 82.8) \\
     & 2   & (83.2, 76.7) & (82.2, 75.8) & & (85.8, 82.3) & (85.4, 81.5) \\
\hline
\end{tabular}
\caption{\footnotesize Empirical coverage probabilities of lower and upper (paired in parentheses) one-sided 90\% confidence intervals with different combinations of the index $d$, sample size $n$ and block size $b_n = \lfloor cn^{0.5} \rfloor$ when $a_j = (1+j)^{d-1}\log(1+j)$, $j \geq 0$.}\label{tab:simulationlog}
\end{table}

Finally, consider the strong mixing Example \ref{eg:mix light}, where
$
X_i = \rho |X_{i-1}| + \epsilon_i,
$
following the threshold autoregressive model \citep{Tong:1990}. The $\epsilon_i$'s are i.i.d.\ Gaussian.
 The results for $\rho = 0.5$ are summarized in Table \ref{tab:simulationTAR}. Observe that the method works quite well in this case as well.

\begin{table}[!t]\footnotesize
\centering
\begin{tabular}{cccccc}
\hline \hline
 & \multicolumn{2}{c}{n = 100} & & \multicolumn{2}{c}{n = 500} \\ \cline{2-3} \cline{5-6}
$c$ & SNBS & ZHWW2013 & & SNBS & ZHWW2013 \\
\hline
0.5 & (92.1, 94.3) & (91.7, 93.7) & & (93.2, 89.6) & (93.0, 89.4) \\
1   & (90.0, 88.9) & (88.8, 88.8) & & (91.0, 88.0) & (91.3, 88.5) \\
2   & (86.9, 84.7) & (86.1, 84.0) & & (89.9, 87.2) & (90.1, 87.3) \\
\hline
\end{tabular}
\caption{\footnotesize Empirical coverage probabilities of lower and upper (paired in parentheses) one-sided 90\% confidence intervals with the TAR model (\ref{eqn:TAR}) for different combinations of sample size $n$ and block size $b_n = \lfloor cn^{0.5} \rfloor$.}\label{tab:simulationTAR}
\end{table}

The \texttt R function implementing the method is available from the authors.

\medskip
\noindent\textbf{Acknowledgments} We thank the referees for their helpful comments and suggestions. We also thank Mamikon S. Ginovyan for his suggestions on an earlier version of the paper. The work was partially supported by the NSF grants DMS-1309009 and DMS-1461796 at Boston University.

\bibliographystyle{BibliographyStyleTing}
\setlength{\bibsep}{0mm}
\bibliography{BibliographyTing}

\medskip
\noindent Shuyang Bai~~ \textit{bsy9142@bu.edu}~~~~
Murad S. Taqqu ~~\textit{murad@bu.edu}~~~~
Ting Zhang~~\textit{tingz@bu.edu }\\
Department of Mathematics and Statistics\\
111 Cummington Mall\\
Boston, MA, 02215, US

\end{document}